\documentclass[12pt,reqno]{amsart}
\setlength{\textheight}{23cm}
\setlength{\textwidth}{16cm}
\setlength{\topmargin}{-0.8cm}
\setlength{\parskip}{0.3\baselineskip}
\hoffset=-1.4cm

\usepackage{palatino} 
\usepackage{amsmath,amssymb,amsxtra,color,calligra,mathrsfs}

\usepackage[colorlinks]{hyperref}
\hypersetup{linkcolor=blue,citecolor=blue,filecolor=dullmagenta,urlcolor=blue}
\usepackage[all]{xy}
\usepackage{tikz,tikz-cd,tkz-graph,enumerate}
\usetikzlibrary{matrix,arrows,decorations.pathmorphing}


\DeclareMathOperator{\sHom}{\mathcal{H}\!\textit{om}}

\DeclareMathOperator{\sEnd}{\mathcal{E}\!\textit{nd}}
\DeclareMathOperator{\gr}{\textnormal{gr}}

\DeclareMathOperator{\rk}{\mathrm{rk}}
\DeclareMathOperator{\Id}{\textnormal{Id}}

\DeclareMathOperator{\Aut}{\textnormal{Aut}}

\newtheorem{theorem}{Theorem}[section]
\newtheorem{lemma}[theorem]{Lemma}
\newtheorem{proposition}[theorem]{Proposition}
\newtheorem{corollary}[theorem]{Corollary}

\theoremstyle{definition}
\newtheorem{definition}{Definition}[section]
\newtheorem{remark}{Remark}[section]
\newtheorem{example}{Example}[section]

\numberwithin{equation}{section}

\begin{document}

\baselineskip=15.5pt

\title[Hodge Bundles and Generalized Opers]{System of Hodge Bundles and Generalized Opers on Smooth Projective Varieties} 

\author[S. Basu]{Suratno Basu} 
\address{The Institute of Mathematical Sciences, 4th Cross Street, CIT Campus, 
Taramani, Chennai 600113, Tamil Nadu, India.}
\email{suratnob@imsc.res.in}

\author[A. Paul]{Arjun Paul} 
\address{Department of Mathematics, Indian Institute of Technology Bombay, Powai, Mumbai 400076, Maharashtra, India.} 

\email{arjun.math.tifr@gmail.com} 

\author[A. Saha]{Arideep Saha} 
\address{The Institute of Mathematical Sciences, 4th Cross Street, CIT Campus, 
Taramani, Chennai 600113, Tamil Nadu, India.}
\email{arideep@imsc.res.in}

\subjclass[2010]{14J60, 70G45}

\keywords{Generalized oper, Griffiths transversal filtration, Higgs bundle, 
system of Hodge bundles, semistable bundle.}

\begin{abstract}
	Let $k$ be an algebraically closed field of any characteristic. 
	Let $X$ be a polarized irreducible smooth projective algebraic variety over $k$. 
	We give criterion for semistability and stability of system of Hodge bundles on $X$. 
	We define notion of generalized opers on $X$, and prove semistability of the Higgs bundle 
	associated to generalized opers. We also show that existence of partial oper structure on a 
	vector bundle $E$ together with a connection $\nabla$ over $X$ implies semistability of the pair 
	$(E, \nabla)$. 
\end{abstract}

\maketitle

\section{Introduction}
Let $k$ be an algebraically closed field of characteristic $char(k) \geq 0$. Let $X$ be a polarized irreducible 
smooth projective algebraic variety over $k$. Let $E$ be a vector bundle on $X$ together with a filtration 
\begin{equation}\label{eqn-1.1}
\mathcal{F}^{\bullet}(E)\, : \,\,\, 
E = \mathcal{F}^0(E) \supsetneq \mathcal{F}^1(E) \supsetneq \cdots \supsetneq \mathcal{F}^{n-1}(E) 
\supsetneq \mathcal{F}^n(E) = 0\, 
\end{equation}
where $\mathcal{F}^i(E)$ are subbundles of $E$, for all $i$. Suppose that $E$ admits a flat algebraic 
connection $\nabla : E \to E \otimes \Omega_X^1$ such that the filtration \eqref{eqn-1.1} is 
Griffiths transversal with respect to $\nabla$; meaning that 
$\nabla(\mathcal{F}^i(E)) \subseteq \mathcal{F}^{i-1}(E) \otimes \Omega_X^1$, for all $i = 1, \ldots, n-1$. 
Then $\nabla$ induces a Higgs field $\theta_{\nabla}$ on the associated vector bundle 
$\gr(\mathcal{F}^{\bullet}(E)) := \bigoplus\limits_{i=0}^{n-1} \gr^i(\mathcal{F}^{\bullet}(E))$, where 
$\gr^i(\mathcal{F}^{\bullet}(E)) := \mathcal{F}^i(E)/\mathcal{F}^{i+1}(E)$, for all $i = 0, 1, \ldots, n-1$. 

In \cite{Si}, it is shown that given a flat connection $\nabla$ on $E$, there exists a Griffiths transversal 
filtration $\mathcal{F}^{\bullet}(E)$ such that the associated Higgs bundle 
$(\gr(\mathcal{F}^{\bullet}(E)), \theta_{\nabla})$ is semistable. However, given a Griffiths transversal 
filtration $\mathcal{F}^{\bullet}(E)$ of $E$ with respect to $\nabla$, it is not known, in general, 
whether the associated Higgs bundle $(\gr(\mathcal{F}^{\bullet}(E)), \theta_{\nabla})$ is semistable. 
When $X$ is a compact Riemann surface, in \cite[p.~156]{Bi}, the following possible solution to this 
problem is proposed: 

Let $E$ be a holomorphic vector bundle on $X$ whose all indecomposable components has degree zero. 
Let $\mathcal{F}^{\bullet}(E)$ be a filtration of $E$ by its subbundles on $X$. Then $E$ admits a holomorphic 
connection $\nabla$ such that $\mathcal{F}^{\bullet}(E)$ is Griffiths transversal with respect to $\nabla$ 
if and only if $\gr(\mathcal{F}^{\bullet}(E))$ admits a holomorphic Higgs field $\theta$ such that 
\begin{itemize}
	\item $(\gr(\mathcal{F}^{\bullet}(E)), \theta)$ is semistable, and 
	\item $\theta(\gr^i(\mathcal{F}^{\bullet}(E))) \subseteq 
	\gr^{i-1}(\mathcal{F}^{\bullet}(E)) \otimes \Omega_X^1$, for all $i$. 
\end{itemize}

Note that, the Higgs bundle 
$(\gr(\mathcal{F}^{\bullet}(E)), \theta_{\nabla})$ admits a structure of a system of Hodge bundles on $X$; 
meaning that, $\theta_{\nabla}(\gr^i(\mathcal{F}^{\bullet}(E))) \subseteq \gr^{i-1}(\mathcal{F}^{\bullet}(E)) 
\otimes \Omega_X^1$, for all $i$. Therefore, it is natural to ask, more generally, when 
does a Higgs bundle (not necessarily of degree zero) on $X$ having a structure of a system of Hodge 
bundles is semistable and when it is stable. We give a criterion for this. 

Fix an ample line bundle on $X$. We prove the following results : 
\begin{theorem}\label{thm-1.1}
	Assume that $char(k) \geq 0$, and $\Omega_X^1$ is semistable with $\deg(\Omega_X^1) \geq 0$. 
	Let $(E, \theta)$ be a Higgs bundle on $X$ having a structure of a system of Hodge bundles: 
	$E = \bigoplus\limits_{i=0}^n E_i$ such that $\theta\vert_{E_i} : E_i \stackrel{\simeq}{\longrightarrow} 
	E_{i-1} \otimes \Omega_X^1$ is an isomorphism, for all $i = 1, \ldots, n$. 
	Then $(E, \theta)$ is semistable if $E_i$ is semistable, for all $i = 0, 1, \ldots, n$. 
	The converse holds if $char(k) = 0$. 
\end{theorem}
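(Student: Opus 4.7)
The plan is to prove both implications, handling the forward direction in arbitrary characteristic and the converse using preservation of semistability under tensor products, which holds in characteristic zero.

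For the forward direction, assume each $E_i$ is semistable and take an arbitrary $\theta$-invariant coherent subsheaf $F \subseteq E$. The key observation is that the partial sums $E_{\leq p} := E_0 \oplus \cdots \oplus E_p$ are themselves $\theta$-invariant (since $\theta$ strictly lowers the grading), so $F$ inherits a canonical $\theta$-invariant filtration $F_{\leq p} := F \cap E_{\leq p}$ whose graded pieces $G_p := F_{\leq p}/F_{\leq p-1}$ are coherent subsheaves of $E_p$. The isomorphism $\theta\vert_{E_p}$ then descends to an injection $G_p \hookrightarrow G_{p-1} \otimes \Omega_X^1$ on the associated graded, which provides the rank bound $\rk(G_p) \leq d\,\rk(G_{p-1})$, where $d := \dim X = \rk(\Omega_X^1)$.

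Summing the semistability estimates $\deg(G_p) \leq \mu(E_p)\,\rk(G_p)$, together with the identity $\mu(E_p) = \mu(E_0) + p\,\mu(\Omega_X^1)$ coming from iteration of $E_p \simeq E_{p-1} \otimes \Omega_X^1$, the target inequality $\mu(F) \leq \mu(E)$ reduces, after cancelling the common term $\mu(E_0)\,\rk(F)$ and using $\mu(\Omega_X^1) \geq 0$, to the combinatorial claim $\sum_p p\,g_p \leq \overline{p}\,\sum_p g_p$, where $g_p := \rk(G_p)$ and $\overline{p} := \sum_p p\,\rk(E_p)/\rk(E)$, subject only to $g_p \geq 0$ and $g_p \leq d\,g_{p-1}$. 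This combinatorial step is the main obstacle; I would handle it by linear programming on the polyhedral cone $C = \{g \in \mathbb{R}^{n+1} : g_p \geq 0,\ g_p \leq d\,g_{p-1}\}$, whose extreme rays are the truncated geometric vectors $(1, d, d^2, \ldots, d^k, 0, \ldots, 0)$ for $k = 0, 1, \ldots, n$. On each such ray the weighted average of $p$ is the partial mean $\overline{p}_k := \sum_{p \leq k} p\,d^p / \sum_{p \leq k} d^p$, and since $\overline{p}_k \leq k < k+1$ adjoining the next term strictly increases the mean, giving $\overline{p}_0 < \overline{p}_1 < \cdots < \overline{p}_n = \overline{p}$; hence the linear functional $\sum_p (p - \overline{p})\,g_p$ is non-positive on every extreme ray and therefore on all of $C$.

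For the converse, in characteristic zero, I argue contrapositively. If some $E_i$ is not semistable then, since $E_j \simeq E_0 \otimes (\Omega_X^1)^{\otimes j}$ via iterated $\theta$ and tensor products preserve semistability in characteristic zero, the bundle $E_0$ itself is not semistable. Let $N_0 \subsetneq E_0$ be a maximal destabilizing subsheaf, so $\mu(N_0) > \mu(E_0)$, and set $N_j := N_0 \otimes (\Omega_X^1)^{\otimes j} \subseteq E_j$. Then $F := \bigoplus_j N_j$ is $\theta$-invariant because the isomorphism $\theta\vert_{E_j}$ identifies $N_j$ with $N_{j-1} \otimes \Omega_X^1$, and a slope computation parallel to the one for $E$ yields $\mu(F) - \mu(E) = \mu(N_0) - \mu(E_0) > 0$, contradicting Higgs-semistability of $(E,\theta)$.
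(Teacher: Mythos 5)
Your proof is correct, and the forward direction takes a genuinely different route from the paper's. The paper reduces at once to the maximal destabilizing Higgs subsheaf $F$ and invokes \cite[Lemma 2.4]{LSYZ} ($\mathbb{G}_m$--invariance) to split it as $F=\bigoplus_i (F\cap E_i)$, then runs the slope estimate through Chebyshev's sum inequalities (Lemma \ref{chevy}) and Lemma \ref{ineq-lem}; you instead take an arbitrary $\theta$-invariant subsheaf, filter it by $F\cap(E_0\oplus\cdots\oplus E_p)$, and work with the graded pieces $G_p\subseteq E_p$, which is more elementary and avoids the LSYZ lemma entirely. The numerical steps also differ in substance: the paper's chain $\rk(F_r)\le\cdots\le\rk(F_0)$, needed for its Chebyshev step, is deduced from $F_i\hookrightarrow F_{i-1}\otimes\Omega_X^1$ and is only immediate when $d=\rk(\Omega_X^1)=1$, whereas your extreme-ray argument uses exactly the available constraint $\rk(G_p)\le d\,\rk(G_{p-1})$ and still closes the inequality in every dimension (your generators $(1,d,\dots,d^k,0,\dots,0)$ do span the cone, since the coefficients $c_p=g_p/d^p-g_{p+1}/d^{p+1}$ are non-negative, and the monotonicity of the partial means $\overline{p}_k$ is precisely the content of Lemma \ref{ineq-lem}); so your version is arguably more robust for $\dim X>1$. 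Your converse is essentially the paper's proof of Theorem \ref{main-thm-1}: both reduce to $E_0$ via preservation of semistability under tensor product in characteristic zero and then transport a destabilizing subsheaf of $E_0$ across the graded pieces to destabilize $(E,\theta)$; you simply define $N_j=N_0\otimes(\Omega_X^1)^{\otimes j}$ directly rather than passing through the maximal destabilizing subsheaves of each $E_p$ and Lemmas \ref{lem-3} and \ref{cor-1}.
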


\begin{theorem}
	Assume that $char(k) \geq 0$, and $\deg(\Omega_X^1) > 0$. 
	The Higgs bundle $(E, \theta)$ in Theorem \ref{thm-1.1} is stable if $E_i$ is stable, for all 
	$i = 0, 1, \ldots, n$. Converse holds if $\dim_k(X) = 1$. 
\end{theorem}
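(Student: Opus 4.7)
The plan is to run the semistability argument of Theorem \ref{thm-1.1} and upgrade the resulting weak slope inequality to a strict one, exploiting stability of the $E_i$ together with $\deg(\Omega_X^1) > 0$. Assume every $E_i$ is stable, hence semistable, so $(E,\theta)$ is semistable by Theorem \ref{thm-1.1}. Suppose toward a contradiction that $(E, \theta)$ is not stable: there exists a saturated $\theta$-invariant subsheaf $F$ with $0 \neq F \neq E$ and $\mu(F) = \mu(E)$. I would first reduce to the graded case via the $\theta$-invariant filtration $\bigoplus_{i \leq k} E_i$ of $E$: the associated graded of the induced filtration on $F$ embeds in $E$ as a graded $\theta$-invariant subsheaf of the same slope as $F$, so I may assume $F = \bigoplus_i F_i$ with $F_i \subseteq E_i$ and $\theta(F_i) \subseteq F_{i-1} \otimes \Omega_X^1$. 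Since $\theta|_{E_i}$ is injective, $F_i = 0$ forces $F_{i+1} = 0$ and $F_i = E_i$ forces $F_{i-1} = E_{i-1}$; hence there are integers $-1 \leq a < b \leq n+1$, with $a < n$ and $b \geq 1$ (as $F$ is proper), such that $F_i = E_i$ for $i \leq a$, $F_i = 0$ for $i \geq b$, and $F_i$ is a proper nonzero subsheaf of $E_i$ for $a < i < b$.

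Setting $r_i = \rk(E_i)$, $s_i = \rk(F_i)$, $\alpha_i = \mu(E_i)$, $\beta_i = \mu(F_i)$, the injection $F_i \hookrightarrow F_{i-1} \otimes \Omega_X^1$ combined with $r_i = r_{i-1}\dim_k(X)$ shows that the ratios $t_i := s_i/r_i$ are non-increasing in $i$. Since $\deg(\Omega_X^1) > 0$, the arithmetic progression $\alpha_i = \alpha_0 + i\,\mu(\Omega_X^1)$ is strictly increasing, and a standard symmetrization identity yields $\sum_i s_i \alpha_i/\sum_i s_i \leq \mu(E)$ with equality iff $t_i$ is constant. Stability of each $E_i$ gives $\beta_i < \alpha_i$ on the middle range $a < i < b$. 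Combining, I obtain $\mu(F) < \mu(E)$: if the middle range is empty, then $t_i$ jumps from $1$ to $0$ at $i = b \leq n$, so $t_i$ is not constant and the weighted-average inequality is strict; if the middle range is nonempty, the strict $\beta$-drop gives strictness directly. In either case $\mu(F) < \mu(E)$, contradicting $\mu(F) = \mu(E)$.

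For the converse, when $\dim_k X = 1$, $\Omega_X^1$ is a line bundle, $\rk(E_i) = \rk(E_0)$ for all $i$, and the $\theta|_{E_i}$ give canonical identifications $E_i \cong E_0 \otimes (\Omega_X^1)^{\otimes i}$. If some $E_m$ fails to be stable, pick $F_m \subsetneq E_m$ nonzero with $\mu(F_m) \geq \mu(E_m)$; then $F_i := F_m \otimes (\Omega_X^1)^{\otimes(i-m)} \subseteq E_i$ (via the identifications) assembles into a proper nonzero graded $\theta$-invariant subsheaf $F = \bigoplus_i F_i$. Using $\mu(F_i) = \mu(F_m) + (i-m)\deg(\Omega_X^1)$ and the analog for $\mu(E_i)$, a direct computation yields $\mu(F) - \mu(E) = \mu(F_m) - \mu(E_m) \geq 0$, contradicting stability of $(E,\theta)$. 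I expect the weighted-average step in the first part to be the main obstacle: establishing the monotonicity of $t_i = s_i/r_i$ and deploying the rearrangement inequality against the strictly increasing $\alpha_i$ to obtain the correct equality characterization requires care, particularly in the subcase where all $F_i$ are nonzero and proper.
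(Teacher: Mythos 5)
Your proposal is correct, and while it runs on the same engine as the paper's proof (reduce to a graded Hodge-type subsheaf $F=\bigoplus F_i$ with $F_i\subseteq E_i$, then play the monotonicity of $\rk(F_i)/\rk(E_i)$ against the strictly increasing slopes $\mu(E_i)=\mu(E_0)+i\,\mu(\Omega_X^1)$ via a Chebyshev-type inequality, exactly as in Theorem \ref{thm-1} and Lemmas \ref{chevy}, \ref{ineq-lem}), the architecture is genuinely different. The paper does not argue with an arbitrary equal-slope invariant subsheaf: it takes the \emph{socle}, uses its $\mathbb{G}_m$-invariance (via \cite[Lemma 2.4]{LSYZ}) to see it is graded, derives a contradiction to conclude that $(E,\theta)$ is polystable, and then invokes Proposition \ref{simple} (simplicity, which needs stability of the $E_i$ and $\deg(\Omega_X^1)>0$) to pass from polystable to stable. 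You instead prove the strict inequality $\mu(F)<\mu(E)$ directly for every proper nonzero $\theta$-invariant subsheaf, replacing the socle by the elementary associated-graded construction along the filtration $\bigoplus_{i\le k}E_i$, and you never need the simplicity proposition. Your two-case analysis on the ``middle range'' is the real added value: the paper asserts that the inequality $\deg(F_p)\le \rk(F_p)\mu(E_p)$ is strict for at least one $p\le r$, but this fails when every nonzero $F_p$ equals $E_p$ (e.g.\ $F=\bigoplus_{p\le a}E_p$ with $a<n$), in which case strictness must instead come from $\mu(\bigoplus_{p\le a}E_p)<\mu(E)$, i.e.\ from the strict form of Lemma \ref{ineq-lem} when $\deg(\Omega_X^1)>0$; your equality characterization ``$t_i$ constant'' handles both subcases uniformly and thereby patches this small gap. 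Your converse for $\dim_k(X)=1$ is essentially identical to the paper's Theorem \ref{main-thm-2} (the paper transports a Jordan--H\"older piece of $E_0$, you transport a destabilizing subsheaf of an arbitrary unstable $E_m$; the slope computation is the same).
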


We give some examples to show that the isomorphism conditions 
$$\theta\vert_{E_i} : E_i \stackrel{\simeq}{\longrightarrow} E_{i-1} \otimes \Omega_X^1\,,\,\, 
\forall\,\, i = 1, \ldots, n\,$$ 
and semistability of $E_i$, for all $i = 0, 1, \ldots, n$, in the Theorem \ref{thm-1.1} are crucial 
for semistability of $(E, \theta)$. 

Finally, we give a criteria on the Griffiths transversal filtration for a flat connection, 
which we refer to as ``\textit{generalized oper}'' so that the associated Higgs bundle becomes semistable. 
We define notion of semistability of connections, and prove the following : 
\begin{theorem}\label{thm-1.3}
	Let $X$ be a polarized smooth projective variety over $k$, and let $\Omega_X^1$ be semistable of non-negative degree. 
	Let $E$ be a vector bundle on $X$ together with a connection (not necessarily flat) 
	$\nabla : E \longrightarrow E \otimes \Omega_X^1$. Let 
	$$\mathcal{F}^{\bullet}(E) : 0 = \mathcal{F}^n(E) \subsetneq \mathcal{F}^{n-1}(E) \subsetneq \cdots 
	\subsetneq \mathcal{F}^1(E) \subsetneq \mathcal{F}^0(E) = E$$ 
	be a $\nabla$-Griffiths transversal filtration of $E$ by its subbundles such that the induced $\mathcal{O}_X$--module 
	homomorphism 
	$\theta_{\nabla} : \gr(\mathcal{F}^{\bullet}(E)) \longrightarrow \gr(\mathcal{F}^{\bullet}(E)) \otimes \Omega_X^1$ 
	is a Higgs field on $\gr(\mathcal{F}^{\bullet}(E))$ (i.e., $\theta_{\nabla} \wedge \theta_{\nabla} = 0$ in 
	$H^0(X, \sEnd(\gr(\mathcal{F}^{\bullet}(E)))\otimes\Omega_X^2)$). If the Higgs bundle 
	$(\gr(\mathcal{F}^{\bullet}(E)), \theta_{\nabla})$ is semistable (respectively, stable), then the pair 
	$(E, \nabla)$ is semistable (respectively, stable). 
\end{theorem}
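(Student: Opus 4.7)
The plan is to reduce (semi)stability of $(E, \nabla)$ to (semi)stability of the Higgs bundle $(\gr(\mathcal{F}^{\bullet}(E)), \theta_{\nabla})$ by pulling a $\nabla$-invariant subbundle back along the filtration and passing to the associated graded. Let $F \subseteq E$ be a $\nabla$-invariant subbundle, and set $F^i := F \cap \mathcal{F}^i(E)$ for $i = 0, 1, \ldots, n$, giving a filtration $F = F^0 \supseteq F^1 \supseteq \cdots \supseteq F^n = 0$ by torsion-free subsheaves of $F$. Since $\nabla(F) \subseteq F \otimes \Omega_X^1$ and $\nabla(\mathcal{F}^i(E)) \subseteq \mathcal{F}^{i-1}(E) \otimes \Omega_X^1$, the restriction $\nabla\vert_F$ satisfies $\nabla(F^i) \subseteq F^{i-1} \otimes \Omega_X^1$, so this induced filtration on $F$ is again Griffiths transversal. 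The natural injections $F^i/F^{i+1} \hookrightarrow \mathcal{F}^i(E)/\mathcal{F}^{i+1}(E) = \gr^i(\mathcal{F}^{\bullet}(E))$ then assemble to an injection of coherent sheaves $F' := \bigoplus_{i} F^i/F^{i+1} \hookrightarrow \gr(\mathcal{F}^{\bullet}(E))$.

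Next I would verify that $F'$ is a Higgs subsheaf of $(\gr(\mathcal{F}^{\bullet}(E)), \theta_{\nabla})$. This is immediate from the construction, since $\theta_{\nabla}$ is induced by $\nabla$ and restricts on $F'$ to the map induced by $\nabla\vert_F$, which sends $F^i$ into $F^{i-1} \otimes \Omega_X^1$ by the previous step. For the slope comparison I use additivity of rank and degree along the two filtrations: telescoping gives $\rk(F') = \sum_i \rk(F^i/F^{i+1}) = \rk(F)$, and additivity of degree in short exact sequences of torsion-free coherent sheaves on the smooth polarized $X$ yields $\deg(F') = \sum_i \deg(F^i/F^{i+1}) = \deg(F)$. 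The same computation applied to $\mathcal{F}^{\bullet}(E)$ gives $\rk(\gr(\mathcal{F}^{\bullet}(E))) = \rk(E)$ and $\deg(\gr(\mathcal{F}^{\bullet}(E))) = \deg(E)$, so $\mu(F') = \mu(F)$ and $\mu(\gr(\mathcal{F}^{\bullet}(E))) = \mu(E)$.

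Semistability of $(\gr(\mathcal{F}^{\bullet}(E)), \theta_{\nabla})$ now yields $\mu(F) = \mu(F') \leq \mu(\gr(\mathcal{F}^{\bullet}(E))) = \mu(E)$, proving semistability of $(E, \nabla)$. For stability, a proper nonzero $\nabla$-invariant subbundle $F$ with $\mu(F) = \mu(E)$ would produce a Higgs subsheaf $F'$ achieving the slope of the stable Higgs bundle, forcing $F' = 0$ or $F' = \gr(\mathcal{F}^{\bullet}(E))$; the first case gives $F^i = F^{i+1}$ for all $i$, hence $F = 0$, while the second, by downward induction on $i$ starting from $F^n = 0 = \mathcal{F}^n(E)$ and using the resulting equalities $F^i/F^{i+1} = \gr^i(\mathcal{F}^{\bullet}(E))$, gives $F^i = \mathcal{F}^i(E)$ for all $i$ and in particular $F = E$, so both cases contradict properness. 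The main subtlety I expect is that the $F^i$ need not be saturated in $\mathcal{F}^i(E)$, so the quotients $F^i/F^{i+1}$ can fail to be locally free; they remain torsion-free as subsheaves of the bundle $\gr^i(\mathcal{F}^{\bullet}(E))$, however, and the degree additivity along the filtration of $F$ nonetheless holds because $F$ and all the $F^i$ are torsion-free coherent sheaves.
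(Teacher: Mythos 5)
Your proof is correct and follows essentially the same route as the paper's: intersect the filtration with a $\nabla$-invariant subsheaf $F$, observe that the induced map on associated gradeds makes $\gr(\mathcal{F}^{\bullet}(F))$ a Higgs subsheaf of $(\gr(\mathcal{F}^{\bullet}(E)), \theta_{\nabla})$, and compare slopes via additivity of rank and degree along the filtrations. You additionally spell out the stability case (which the paper's written proof omits, only treating semistability by contradiction) and correctly flag the only real subtlety, namely that the $\mathcal{F}^i(E)\cap F$ need not be saturated, which is harmless for the degree computation.
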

Theorem \ref{thm-1.3} is a sort of converse of \cite[Theorem 2.2]{LSYZ} by Lan-Sheng-Yang-Zuo. This also generalize 
\cite[Proposition 3.4.4]{JP} of Joshi-Pauly proved for the case of curve in positive characteristic.

\section{Griffiths Transversal Filtration}
\subsection{Preliminaries}
Let $k$ be an algebraically closed field of characteristic $char(k) \geq 0$. 
Let $X$ be an irreducible smooth projective algebraic variety over $k$. 
Let $\mathcal{O}_X$ be the sheaf of regular functions on $X$. 
Let $\Omega_X^1$ be the cotangent bundle of $X$. 
Let $E$ be a coherent sheaf of $\mathcal{O}_X$--modules on $X$. The rank of $E$ is defined 
to be the dimension of the generic fiber of $E$. We denote it by $\rk(E)$. 
Since $X$ is irreducible, this is well-defined. 
We say that $E$ is a \textit{vector bundle} on $X$ if it is locally free and of finite rank on $X$. 
An $\mathcal{O}_X$--submodule $F$ of a vector bundle $E$ is said to be a \textit{subbundle} of $E$ 
if $F$ is locally free and the quotient sheaf $E/F$ is torsion free on $X$. 

Let $E$ be a vector bundle on $X$. 
\begin{definition}\label{connection}
	A \textit{  connection} on $E$ is a $k$--linear sheaf homomorphism 
	\begin{equation}
		\nabla : E \longrightarrow E \otimes \Omega_X^1\,,
	\end{equation}
	satisfying the following Leibniz rule:
	\begin{equation}\label{leibniz-id}
		\nabla(f\cdot s)\, =\, s \otimes {\rm d}f\, +\, f\cdot \nabla(s)\,,
	\end{equation}
	for every section $s \in E(U)$ and regular function $f \in \mathcal{O}_X(U)$, 
	for any open subset $U \subset X$. 
\end{definition}

Let $\Omega_X^2 := \bigwedge^2\Omega_X^1$. 
Given a connection $\nabla$ on $E$, we can extend it to a $k$--linear sheaf homomorphism 
(denoted by the same symbol)
$$\nabla : E \otimes \Omega_X^1 \longrightarrow E \otimes \Omega_X^2$$ 
satisfying $\nabla(s \otimes \omega) = s \otimes d\omega - \nabla(s) \wedge \omega$, 
for all local sections $s \in E(U)$ and $\omega \in \Omega_X^1(U)$. 
This defines an element 
$$\kappa(\nabla) := \nabla \circ \nabla \in H^0(X, \sEnd(E)\otimes\Omega_X^2)\,,$$ 
called the \textit{curvature} of $\nabla$. 
A connection $\nabla$ is said to be \textit{flat} if $\kappa(\nabla) = 0$. 

\begin{definition}\label{G-transv}
	Let $E$ be a vector bundle on $X$ and let 
	\begin{equation}\label{filtration}
		\mathcal{F}^{\bullet}(E) \,\, : \,\, E = \mathcal{F}^0(E) \supsetneq \mathcal{F}^1(E) 
		\supsetneq \cdots \supsetneq \mathcal{F}^{n-1}(E) \supsetneq \mathcal{F}^n(E) = 0\,,
	\end{equation}
	be a filtration of $E$ by its subbundles. The filtration $\mathcal{F}^{\bullet}(E)$ is said to 
	be {\it Griffiths transversal} for a flat connection $\nabla$ on $E$ if it satisfies the 
	following conditions:
	\begin{equation}\label{GTF}
	\nabla(\mathcal{F}^i(E)) \subseteq \mathcal{F}^{i-1}(E) \otimes \Omega_X^1\,, \,\, 
	\forall \, i = 1, \ldots, n-1\,.
	\end{equation}
\end{definition}

\subsection{Associated Higgs Bundle}\label{as-higgs-bun}
\begin{definition}
	A \textit{Higgs sheaf} on $X$ is a pair $(E, \theta)$, where $E$ is a coherent sheaf of 
	$\mathcal{O}_X$--modules on $X$ and $\theta : E \longrightarrow E\otimes\Omega_X^1$ is an 
	$\mathcal{O}_X$--module homomorphism such that the following composite $\mathcal{O}_X$--module 
	homomorphism vanishes identically: 
	\begin{equation}
	\theta\wedge\theta : E \stackrel{\theta}{\longrightarrow} E\otimes\Omega_X^1 
	\stackrel{\theta\otimes\Id_{\Omega_X^1}}{\longrightarrow} 
	E\otimes\Omega_X^1\otimes\Omega_X^1 \stackrel{\Id_E\otimes(-\wedge -)}{\longrightarrow} 
	E\otimes\Omega_X^2\,. 
	\end{equation}
\end{definition}

Consider a triple $(E, \mathcal{F}^{\bullet}(E), \nabla)$, where $E$ is a vector bundle on $X$ 
together with a flat connection $\nabla$, and a filtration 
$\mathcal{F}^{\bullet}(E)$ on $E$, as in \eqref{filtration}, which is Griffiths transversal for $\nabla$ 
(see \eqref{GTF}). Then $\nabla$ induces an $\mathcal{O}_X$--linear homomorphism 
\begin{equation}\label{theta-i}
	\theta_{\nabla}^i : {\rm gr}^i(\mathcal{F}^{\bullet}(E)) \longrightarrow 
	{\rm gr}^{i-1}(\mathcal{F}^{\bullet}(E)) \otimes \Omega_X^1\,,
\end{equation}
where ${\rm gr}^i(\mathcal{F}^{\bullet}(E)) = \mathcal{F}^i(E)/\mathcal{F}^{i+1}(E)$, 
for all $0 \leq i \leq n-1$, and ${\rm gr}^{-1}(\mathcal{F}^{\bullet}(E)) := 0$; 
the $\mathcal{O}_X$--linearity of $\theta_{\nabla}^i$ 
follows from the Leibniz rule \eqref{leibniz-id}. 
Thus we have an $\mathcal{O}_X$--linear homomorphism 
\begin{equation}\label{theta-nab}
	\theta_{\nabla} : {\rm gr}(\mathcal{F}^{\bullet}(E)) \longrightarrow 
	{\rm gr}(\mathcal{F}^{\bullet}(E)) \otimes \Omega_X^1\,, 
\end{equation}
where 
\begin{equation}\label{gr-bun}
	{\rm gr}(\mathcal{F}^{\bullet}(E)) = \bigoplus\limits_{i=0}^{n-1}{\rm gr}^i(\mathcal{F}^{\bullet}(E))\,.
\end{equation}
Note that, the flatness of $\nabla$ ensures that $\theta_{\nabla}\wedge \theta_{\nabla} = 0$. 
Therefore, $(\gr(\mathcal{E}^\bullet), \theta_{\nabla})$ is a Higgs bundle over $X$. 
Note that the Higgs field $\theta_{\nabla}$ satisfies $\theta_{\nabla}^n = 0$, 
and hence is nilpotent in the graded $k$--algebra 
$\bigoplus\limits_{i=0}^n H^0\left(X\,,\, \sEnd(E)\otimes \left(\Omega_X^1\right)^{\otimes i}\right)$, 
where $\sEnd(E)$ is the sheaf of $\mathcal{O}_X$--module endomorphisms of $E$. 

A \textit{polarization} on $X$ is given by choice of an ample line bundle $L$ on it. 
Fix an ample line bundle $L$ on $X$. Let $E$ be a non-zero coherent sheaf of 
$\mathcal{O}_X$--modules on $X$. Then the degree of $E$ with respect to $L$ is defined by 
$$\deg(E) := c_1(\det(E)) \cdot [L]^{n-1}\,,$$
where $\det(E)$ is the \textit{determinant} line bundle of $E$. 
If $\rk(E) > 0$, the ratio $\mu(E) := \deg(E)/\rk(E)$ is called the \textit{slope} of $E$. 

\begin{definition}\label{sshb}
	A torsion free Higgs sheaf $(E, \theta)$ on $X$ is said to be \textit{semistable} 
	(respectively,  {\it stable}) if for any non-zero proper subsheaf $F \subset E$ with 
	$0 < \rk(F) < \rk(E)$ and $\theta(F) \subseteq F \otimes \Omega_X^1$, we have 
	$$ \mu(F) \leq \mu(E)\,\,\, (\text{respectively,}\,\, \mu(F) <  \mu(E))\,.$$ 
\end{definition}

\begin{remark}
	A torsion free coherent sheaf $E$ on $X$ can be considered as a Higgs sheaf $(E, \theta)$ 
	with zero Higgs field $\theta = 0$ on $E$. Then the above notion of semistability and 
	stability coincides with the corresponding notions for torsion free coherent sheaves. 
\end{remark}

\begin{definition}\label{Higgs-hom}
	Let $(E_1, \theta_1)$ and $(E_2, \theta_2)$ be two Higgs sheaves on $X$. 
	A \textit{Higgs homomorphism} from $(E_1, \theta_1)$ to $(E_2, \theta_2)$ is given 
	by an $\mathcal{O}_X$--module homomorphism $\varphi : E_1 \longrightarrow E_2$ such that 
	$\theta_2\circ \varphi = (\varphi\times\Id_{\Omega_X^1})\circ \theta_1$. 
\end{definition}

\begin{lemma}\label{lem-2}
	Let $(E, \theta)$ and $(F, \phi)$ be two Higgs bundles on $X$. 
	Let $\Theta = \theta\otimes\Id_F + \Id_E\otimes\phi$. If $(E\otimes F, \Theta)$ is semistable, 
	then both $(E, \theta)$ and $(F, \phi)$ are semistable. Converse holds if the characteristic of $k$ is zero. 
\end{lemma}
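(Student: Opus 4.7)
The plan is to handle the two directions separately. The forward direction is elementary via slope additivity of tensor products, so I would begin there. Assume $(E \otimes F, \Theta)$ is semistable and let $E_1 \subsetneq E$ be a non-zero proper Higgs subsheaf, meaning $\theta(E_1) \subseteq E_1 \otimes \Omega_X^1$. Form $E_1 \otimes F \subseteq E \otimes F$; the Leibniz-type identity
$$\Theta(E_1 \otimes F) = \theta(E_1) \otimes F + E_1 \otimes \phi(F) \subseteq (E_1 \otimes F) \otimes \Omega_X^1$$
shows this is a $\Theta$-invariant subsheaf. Since $\mu(A \otimes B) = \mu(A) + \mu(B)$ by multiplicativity of determinants (applied to the saturations if needed, which only improves slopes), the semistability of $(E \otimes F, \Theta)$ yields
$$\mu(E_1) + \mu(F) = \mu(E_1 \otimes F) \leq \mu(E \otimes F) = \mu(E) + \mu(F),$$
hence $\mu(E_1) \leq \mu(E)$. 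The symmetric argument applied to Higgs subsheaves of $F$ gives semistability of $(F, \phi)$.

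For the converse in characteristic zero, the strategy is to invoke the theorem of Simpson asserting that in characteristic zero the tensor product of two semistable Higgs sheaves on a polarized smooth projective variety is again semistable. Granting this, if both $(E, \theta)$ and $(F, \phi)$ are semistable of slopes $\mu_E$ and $\mu_F$ respectively, then $(E \otimes F, \Theta)$ is semistable of slope $\mu_E + \mu_F$. I would cite this result rather than reproduce its proof, since the standard approaches (the Kobayashi--Hitchin correspondence when $k = \mathbb{C}$, Tannakian reduction to the Ramanan--Ramanathan tensor-product theorem for principal bundles, or socle-filtration techniques) are all substantial and orthogonal to the rest of the paper.

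The main obstacle is precisely the converse direction. In positive characteristic the statement genuinely fails: tensor products of semistable vector bundles (already with trivial Higgs field) can be destabilized by Frobenius pullback, so any proof of the converse must use the characteristic zero assumption in an essential way, which is exactly why Simpson's deep theorem cannot be avoided. The forward direction, by contrast, works in any characteristic, because it relies only on slope additivity and on the elementary observation that a Higgs subsheaf of $E$ tensored with $F$ remains $\Theta$-invariant in $E \otimes F$.
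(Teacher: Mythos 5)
Your proposal is correct and follows essentially the same route as the paper: tensor a (de)stabilizing Higgs subsheaf of $E$ with $F$, note that it is $\Theta$-invariant, use $\mu(E_1\otimes F)=\mu(E_1)+\mu(F)$, and cite Simpson's tensor-product theorem for semistable Higgs bundles in characteristic zero for the converse. The only cosmetic difference is that the paper argues by contradiction with the maximal destabilizing Higgs subsheaf while you test an arbitrary Higgs subsheaf directly.
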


\begin{proof}
	Suppose that $(E\otimes F, \Theta)$ is semistable. 
	If $(E, \theta)$ were not semistable, then there is a maximal destabilizing Higgs subsheaf 
	$(E_0, \theta\vert_{E_0})$ of $(E, \theta)$ with $\mu(E_0) > \mu(E)$. Since the functor 
	$-\otimes F$ is left exact, $(E_0 \otimes F, \theta\vert_{E_0}\otimes\Id_F + \Id_{E_0}\otimes\phi)$ 
	is a destabilizing subsheaf of $(E\otimes F, \Theta)$, with 
	$\mu(E_0 \otimes F) = \mu(E_0) + \mu(F) > \mu(E) + \mu(F) = \mu(E\otimes F)$, contradicting Higgs 
	semistability of $(E\otimes F, \Theta)$. Therefore, both $(E, \theta)$ and $(F, \phi)$ are semistable. 
	For the converse part, see \cite[Corollary 3.8, p.~38]{Si2}. 
\end{proof}

\section{System of Hodge bundles and semistability}\label{sec-3}
Let $X$ be an irreducible smooth projective algebraic variety over $k$ together with a fixed 
ample line bundle on it. 

\begin{definition}
	A Higgs bundle $(E, \theta)$ is said to have a structure of a \textit{system of Hodge bundles} if 
	$E$ has a direct sum decomposition $E = \bigoplus\limits_{i=0}^n E_i$ by its subbundles $E_i$ such that 
	$\theta(E_i) \subseteq E_{i-1}\otimes\Omega_X^1$, for all $0 \leq i \leq n$, with $E_{-1} = 0$. 
\end{definition}

\subsection{Criterion for semistability of a system of Hodge bundles}
Now we give a criterion for semistability of a Higgs bundle having a structure of a system of Hodge bundles. 

\begin{theorem}\label{thm-1}
	Assume that $\deg(\Omega_X^1) \geq 0$. Let $(E, \theta)$ be a Higgs bundle on $X$ which 
	admits a structure of a system of Hodge bundles $E = \bigoplus_{i=0}^{n} E_i$. Suppose that, 
	$\theta\vert_{E_i} : E_i \longrightarrow E_{i-1}\otimes\Omega_X^1$ is an isomorphism of 
	$\mathcal{O}_X$--modules, for all $i \in \{1, \ldots, n\}$. If $E_i$ is semistable, for all 
	$i \in \{1, \ldots, n\}$, then $(E, \theta)$ is a semistable Higgs bundle. 
\end{theorem}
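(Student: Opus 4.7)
The plan is to compare $\mu(F)$ to $\mu(E)$ for an arbitrary $\theta$-invariant subsheaf $F \subseteq E$ by first transferring the problem to the associated graded of a weight filtration, and then reducing it to an elementary combinatorial inequality. First I would introduce the weight filtration $W^i E := \bigoplus_{j\leq i} E_j$. Since $\theta(E_j) \subseteq E_{j-1}\otimes \Omega_X^1 \subseteq W^i E \otimes \Omega_X^1$ for every $j \leq i$, each $W^i E$ is a Higgs subbundle of $E$, hence $W^i F := F \cap W^i E$ is a Higgs subsheaf of $F$. Setting $F_i := W^i F / W^{i-1} F$, the natural map $W^i F \hookrightarrow W^i E$ induces an injection $F_i \hookrightarrow E_i$. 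By additivity of degree and rank in short exact sequences we have $\mu(F) = \mu(\bigoplus_i F_i)$, so it suffices to establish $\mu(F) \leq \mu(E)$ under the extra assumption that $F = \bigoplus_i F_i$ is a \emph{sub-Hodge system}, with $F_i \subseteq E_i$ and $\theta(F_i) \subseteq F_{i-1} \otimes \Omega_X^1$.

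Next I would extract two numerical constraints. Write $r_i := \rk(E_i)$, $s_i := \rk(F_i)$, and $d := \rk(\Omega_X^1) = \dim X$. Because $\theta|_{E_i}$ is an isomorphism, its restriction $\theta|_{F_i} : F_i \to F_{i-1}\otimes \Omega_X^1$ is injective, yielding $s_i \leq d\, s_{i-1}$. Applied to $E$ itself, the isomorphism gives $r_i = d\, r_{i-1}$, whence $r_i = d^i r_0$ and $\mu(E_i) = \mu(E_0) + i\,\mu(\Omega_X^1)$. Semistability of each $E_i$ then gives $\deg(F_i) \leq s_i\bigl(\mu(E_0) + i\,\mu(\Omega_X^1)\bigr)$. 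Summing over $i$, dividing by $\sum s_i$, and using $\mu(\Omega_X^1) \geq 0$, the desired inequality $\mu(F) \leq \mu(E)$ reduces to the weighted-mean estimate
\[
\frac{\sum_i i\, s_i}{\sum_i s_i} \;\leq\; \frac{\sum_i i\, r_i}{\sum_i r_i}.
\]

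The combinatorial inequality above is the one real calculation of the argument and is the step I expect to require the most care. Setting $a_i := s_i / r_i \in [0,1]$, the constraints $s_i \leq d\, s_{i-1}$ and $r_i = d\, r_{i-1}$ translate into $a_i \leq a_{i-1}$, so $(a_i)$ is non-increasing while $(i)$ is increasing. Chebyshev's sum inequality with weights $r_i$ then delivers the estimate; concretely, the difference of the cross products equals $\tfrac{1}{2}\sum_{i,j}(i-j)(a_j - a_i)\,r_i r_j$, each term of which is non-negative by monotonicity of $(a_i)$. Combined with the slope bounds from the previous paragraph, this yields $\mu(F) \leq \mu(E)$ and hence semistability of $(E, \theta)$. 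Note that the argument uses only ordinary slope-semistability of each $E_i$ and works in arbitrary characteristic, consistent with the direction stated in Theorem \ref{thm-1.1}.
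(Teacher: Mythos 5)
Your proof is correct, and while it shares the paper's broad strategy (bound $\deg(F_i)$ using semistability of $E_i$, extract a rank constraint from injectivity of $\theta$ on graded pieces, close with a Chebyshev-type inequality), it departs from the paper at two substantive points, both to its benefit. First, the paper works only with the maximal destabilizing Higgs subsheaf and invokes \cite[Lemma 2.4]{LSYZ} (a $\mathbb{G}_m$-invariance argument) to split it as $\bigoplus_i (F\cap E_i)$; you instead take an arbitrary $\theta$-invariant $F$, intersect it with the weight filtration $W^iE=\bigoplus_{j\le i}E_j$, and pass to the associated graded, using that $\deg$ and $\rk$ are additive and that tensoring with the locally free $\Omega_X^1$ commutes with intersections. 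This is self-contained and yields the slope bound for \emph{every} Higgs subsheaf rather than just the maximal destabilizing one. Second, and more importantly, your weighted Chebyshev inequality with the non-increasing sequence $a_i=s_i/r_i$ is the correct normalization of the numerical step: the paper deduces from $F_i\hookrightarrow F_{i-1}\otimes\Omega_X^1$ that $\rk(F_r)\le\cdots\le\rk(F_0)$ (its \eqref{rk-F-p}), but that inclusion only gives $\rk(F_i)\le d\cdot\rk(F_{i-1})$ with $d=\rk(\Omega_X^1)=\dim X$, so the claimed monotonicity of the unnormalized ranks holds only for curves, and the subsequent application of Lemma \ref{chevy}(\ref{chev-2}) needs it. Your ratio $a_i=s_i/r_i$ is genuinely non-increasing in all dimensions because the factor $d$ in $s_i\le d\,s_{i-1}$ cancels against $r_i=d\,r_{i-1}$, and the symmetrized identity $\tfrac{1}{2}\sum_{i,j}(i-j)(a_j-a_i)r_ir_j\ge 0$ then closes the argument. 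So your route is not merely an alternative: it repairs the paper's proof in the case $\dim X>1$. The remaining ingredients you use, namely $\mu(E_i)=\mu(E_0)+i\,\mu(\Omega_X^1)$ from $E_i\cong E_0\otimes(\Omega_X^1)^{\otimes i}$ and the hypothesis $\mu(\Omega_X^1)\ge 0$ to convert the weighted-mean estimate into $\mu(F)\le\mu(E)$, all check out and work in arbitrary characteristic, as you note.
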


To prove this theorem, we need the following useful inequalities: 
\begin{lemma}[Chebyshev's sum inequalities]\label{chevy}
	Let $\left(a_i\right)_{i=1}^n$ and $\left(b_j\right)_{j=1}^n$ be two finite sequence of real numbers. 
	\begin{enumerate}[(i)]
		\item\label{chev-2} If $a_1 \geq a_2 \geq \cdots \geq a_n$ and $b_1 \leq b_2 \leq \cdots \leq b_n$, 
		then we have 
		\begin{equation}\label{inequ-2}
			n \left(\sum\limits_{i=1}^n a_i b_i \right) \leq \left(\sum\limits_{i=1}^n a_i\right) 
			\left(\sum\limits_{j=1}^n b_j\right)\,. 
		\end{equation} 
		
		\item\label{chev-1} If $a_1 \leq a_2 \leq \cdots \leq a_n$ and $b_1 \leq b_2 \leq \cdots \leq b_n$, 
		then we have 
		\begin{equation}\label{inequ-1}
			\left(\sum\limits_{j=1}^n b_j\right) \left(\sum\limits_{i=1}^n a_i\right) \leq 
			n \left(\sum\limits_{i=1}^n a_i b_i \right)\,. 
		\end{equation} 
	\end{enumerate}
\end{lemma}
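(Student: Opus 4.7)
The plan is to prove both inequalities simultaneously by analyzing the symmetric double sum
$$S \;:=\; \sum_{i,j=1}^{n} (a_i - a_j)(b_i - b_j).$$
The first step is algebraic: expand and use the symmetry $i \leftrightarrow j$ to get
$$S \;=\; \sum_{i,j=1}^n \bigl(a_i b_i - a_i b_j - a_j b_i + a_j b_j\bigr) \;=\; 2n \sum_{i=1}^n a_i b_i \;-\; 2 \left(\sum_{i=1}^n a_i\right)\!\left(\sum_{j=1}^n b_j\right).$$
So establishing the sign of $S$ under the respective monotonicity hypotheses is exactly what is needed, and both claimed inequalities will follow by rearrangement.

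The second step is a termwise sign analysis. For \eqref{inequ-2}, suppose $(a_i)$ is non-increasing and $(b_j)$ is non-decreasing. For any pair $i < j$, we have $a_i - a_j \geq 0$ and $b_i - b_j \leq 0$, so the product $(a_i - a_j)(b_i - b_j) \leq 0$; the same conclusion holds for $i > j$ by swapping roles (both factors change sign simultaneously), and the diagonal terms vanish. Hence $S \leq 0$, and the expanded formula gives $n \sum a_i b_i \leq \bigl(\sum a_i\bigr)\bigl(\sum b_j\bigr)$. For \eqref{inequ-1}, if both sequences are non-decreasing, then for any $i, j$ the factors $a_i - a_j$ and $b_i - b_j$ have the same sign, so $(a_i - a_j)(b_i - b_j) \geq 0$; hence $S \geq 0$ and the reverse inequality follows.

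There is no serious obstacle here: the proof is elementary, relying only on the trivial identity above and a case check on pairs of indices. The only care needed is to confirm that the symmetric treatment of $i < j$ and $i > j$ covers all off-diagonal terms, which is immediate since swapping the indices negates both differences and therefore preserves the sign of the product.
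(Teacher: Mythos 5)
Your proof is correct and complete: the identity $\sum_{i,j=1}^{n}(a_i-a_j)(b_i-b_j) = 2n\sum_{i=1}^n a_ib_i - 2\left(\sum_{i=1}^n a_i\right)\left(\sum_{j=1}^n b_j\right)$ is verified by direct expansion, and the termwise sign analysis under each monotonicity hypothesis (including the observation that swapping $i$ and $j$ negates both factors, so the off-diagonal terms with $i>j$ behave like those with $i<j$) is sound. The paper states this lemma without proof, treating it as the classical Chebyshev sum inequality, so there is no argument to compare against; the double-sum argument you give is the standard proof and fills that gap correctly.
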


\begin{lemma}\label{ineq-lem}
	Let $d \geq 1$ be an integer. Then for any integers $r$ and $n$, with $0 \leq r \leq n$, 
	we have 
	\begin{equation}\label{inequ-3}
	\left(\sum\limits_{i=0}^r i \cdot d^{i-1}\right) \left(\sum\limits_{j=0}^n d^{j}\right) 
	\leq \left(\sum\limits_{i=0}^{n} i \cdot d^{i-1}\right) \left(\sum\limits_{j=0}^r d^{j}\right)\,. 
	\end{equation}
\end{lemma}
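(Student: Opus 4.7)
Set $T_m := \sum_{i=0}^m i\, d^{i-1}$ and $S_m := \sum_{j=0}^m d^j$ for $m \geq 0$; then the inequality \eqref{inequ-3} reads $T_r S_n \leq T_n S_r$. Since $S_m > 0$ for every $m$, this is equivalent to the monotonicity statement $T_r/S_r \leq T_n/S_n$, i.e.\ the ratio of the ``$i$-weighted'' power sum to the plain power sum is non-decreasing in the cutoff. My plan is to attack this directly, by expanding both products as double sums and rearranging the index sets, rather than going through Chebyshev's inequalities (Lemma \ref{chevy}): the two products run over rectangles of unequal size, and a rearrangement argument adapts most transparently to that asymmetry.

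Writing $T_n S_r$ and $T_r S_n$ as double sums over the rectangles $[0,n]\times[0,r]$ and $[0,r]\times[0,n]$ respectively, the contribution over the common square $[0,r]\times[0,r]$ cancels, leaving two double sums with disjoint index ranges. Renaming the dummy indices $i \leftrightarrow j$ in one of them puts both over the same range $[r+1,n]\times[0,r]$, and the difference collapses to
\[
T_n S_r - T_r S_n \;=\; \sum_{i=r+1}^{n} \sum_{j=0}^{r} (i-j)\, d^{i+j-1}.
\]
For every pair $(i,j)$ in this range, $i \geq r+1 > r \geq j$ gives $i - j \geq 1$, while $d \geq 1$ makes $d^{i+j-1} \geq 0$. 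Every summand is therefore non-negative, so $T_n S_r - T_r S_n \geq 0$, which is the inequality we want.

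There is no serious obstacle here; the work is purely combinatorial, and the only genuine insight required is that the symmetric difference of the two index rectangles fibres over $[r+1,n]\times[0,r]$ in two symmetric ways, so that the swap $i \leftrightarrow j$ consolidates the subtraction into the single non-negative factor $(i-j)$. A proof by induction on $n$ with $r$ fixed (base case $n=r$, where both sides agree) would also work, but its inductive step reduces to the auxiliary inequality $(n+1)S_r \geq d\, T_r$, which is itself proved by the same rearrangement idea, so induction offers no real simplification over the direct calculation.
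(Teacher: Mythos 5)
Your argument is correct: writing $T_nS_r-T_rS_n$ as a difference of double sums over the rectangles $[0,n]\times[0,r]$ and $[0,r]\times[0,n]$, cancelling the common square $[0,r]\times[0,r]$, and swapping the dummy indices in the leftover piece does yield $\sum_{i=r+1}^{n}\sum_{j=0}^{r}(i-j)\,d^{i+j-1}$, and every summand is non-negative because $i\geq r+1>j$ and $d\geq 1$ (note also that $i+j-1\geq 0$ throughout this range, so no negative powers of $d$ ever appear with a non-zero coefficient). There is nothing to compare against on the paper's side: Lemma \ref{ineq-lem} is stated there without any proof, and it is not a direct consequence of the Chebyshev sum inequalities of Lemma \ref{chevy} (those compare two orderings of the \emph{same} index range, whereas \eqref{inequ-3} compares two different truncation lengths), so your rearrangement actually supplies the missing argument. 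A slightly more conceptual way to phrase the same fact, if you want one: since $i\,d^{i-1}=\tfrac{1}{d}\,i\,d^{i}$, the ratio $T_m/S_m$ is $\tfrac{1}{d}$ times the average of $i$ over $\{0,\dots,m\}$ with weights $d^{i}$, and enlarging $m$ only adds mass at larger values of $i$, so this ratio is non-decreasing in $m$; your computation is the explicit verification of that monotonicity. Your remark on the inductive alternative is also accurate, and the auxiliary inequality $(n+1)S_r\geq d\,T_r$ it requires follows at once from $d\,T_r=\sum_{i=0}^{r}i\,d^{i}\leq r\,S_r$.
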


\begin{proof}[Proof of Theorem \ref{thm-1}]
	Since $E_i \cong E_0\otimes(\Omega_X^1)^{\otimes i}$, for all $i \in \{0, 1, \ldots, n\}$, 
	we have,
	\begin{equation}\label{deg-E-p}
	\deg(E_i) = i\cdot d^{i-1} \cdot\deg(\Omega_X^1)\cdot \rk(E_0) + d^i \cdot \deg(E_0)\,, 
	\end{equation} 
	and 
	\begin{equation}\label{rk-E-p}
		\rk(E_i) = d^i \cdot \rk(E_0)\,,\,\,\, \forall\, i = 0, \ldots, n\,. 
	\end{equation}
	Now for any integer $k \in \{0, 1, \cdots, n\}$, by \eqref{deg-E-p} and \eqref{rk-E-p} we have, 
	\begin{eqnarray}\label{mu-sum-E-p}
	\mu\left(\bigoplus\limits_{i=0}^k E_i\right) 
	= \frac{\sum\limits_{i=0}^k\deg(E_i)}{\sum\limits_{i=0}^k \rk(E_i)} 
		& = & \frac{\left(\deg(\Omega_X^1) \rk(E_0) \sum\limits_{i=0}^r i \cdot d^{i-1} + 
		\deg(E_0) \sum\limits_{i=0}^k d^i \right)}{\rk(E_0)\sum\limits_{i=0}^k d^i} \nonumber \\ 
		 & = & \frac{\deg(\Omega_X^1) \cdot \sum\limits_{i=0}^r i \cdot d^{i-1}}{\sum\limits_{i=0}^r d^i} 
	\end{eqnarray}
	It follows from \eqref{mu-sum-E-p} and Lemma \ref{ineq-lem} that 
	\begin{equation}\label{mu-E}
		\mu\left(\bigoplus\limits_{i=0}^k E_i\right) \leq \mu(E)\,,\,\,\, \forall\,\, k = 0, \ldots, n\,.  
	\end{equation}
	
	Suppose on the contrary that $(E, \theta)$ is not semistable. 
	Let $F$ be the unique maximal semistable proper Higgs subsheaf of $(E, \theta)$ with 
	\begin{equation}\label{mu-E-F}
		\mu(F) > \mu(E)\,. 
	\end{equation} 
	It follows from \cite[Lemma 2.4]{LSYZ} that $F$ admits a structure of system 
	of Hodge bundle; in particular, $F \cong \bigoplus\limits_{i=0}^{n} F_i$, with 
	$F_i = F\cap E_i$, for all $i = 0, 1, \ldots, n$. 
	
	Since $\theta\vert_{E_i}$ is an isomorphism, we have 
	\begin{equation}\label{F-p}
		F_i \cong \theta(F_i) \subseteq F_{i-1}\otimes \Omega_X^1\,,\,\,\, 
		\forall\,\, i = 0, 1, \ldots, n\,. 
	\end{equation} 
	Therefore, $F_i \neq 0$ implies $F_{i-1} \neq 0$, for all $1 \leq i \leq n$. 
	Let $r \in \{0, \cdots, n\}$ be the largest integer such that $F_r \neq 0$. 
	Then $F = \bigoplus\limits_{i=0}^r F_i$. 
	Now from \eqref{F-p}, we have 
	\begin{equation}\label{rk-F-p}
		0 < \rk(F_r) \leq \rk(F_{r-1}) \leq \cdots \leq \rk(F_0)\,. 
	\end{equation} 
	Since $F_i \neq 0$ and $E_i$ is semistable by assumption, 
	using \eqref{rk-E-p}, we have 
	\begin{equation}\label{Deg-F-p}
		\deg(F_i) \leq \frac{\rk(F_i) \cdot \deg(E_i)/d^i}{\rk(E_0)}\,,\,\forall\, i = 0, 1, \ldots, r\,. 
	\end{equation} 
	Therefore, using \eqref{rk-F-p} and \eqref{deg-E-p}, applying Lemma \ref{chevy} (\ref{chev-2}), 
	from \eqref{Deg-F-p}, we have 
	\begin{equation}\label{eqn2}
	\deg(F) \leq \frac{\sum\limits_{i=0}^r \rk(F_i) \deg(E_i)/d^i}{\rk(E_0)} 
	\leq \frac{\left(\sum\limits_{i=0}^r \rk(F_i)\right)
	\left(\sum\limits_{j=0}^r \deg(E_j)/d^j\right)}{(r+1)\rk(E_0)} 
\end{equation}
Now from \eqref{deg-E-p} and \eqref{eqn2}, applying Lemma \ref{chevy} (\ref{chev-1}), 
we have 
\begin{equation}\label{eqn3}
	\mu(F) \leq \frac{\left(\sum\limits_{j=0}^r \deg(E_j)/d^j\right)}{(r+1)\rk(E_0)}\, 
	\leq \frac{\sum\limits_{i=0}^r \deg(E_i)}{\rk(E_0)\cdot \sum\limits_{i=0}^r d^i} 
	 = \mu\left(\bigoplus\limits_{i=0}^r E_i\right)\,. 
\end{equation} 
Then from \eqref{eqn3} and \eqref{mu-E}, we have 
\begin{equation*}
\mu(F) \leq \mu(E)\,, 
\end{equation*}
	which contradicts \eqref{mu-E-F}. Therefore, $(E, \theta)$ is semistable. 
\end{proof}

\begin{remark}
	Note that, semistability of $E_1 \cong E_0\otimes \Omega_X^1$ 
	forces $\Omega_X^1$ to be semistable. 
	It follows from the relation \eqref{deg-E-p} and \eqref{rk-E-p} that 
	$E = \bigoplus_{i=0}^n E_i$, in Theorem \ref{thm-1}, is semistable if and only 
	if $\deg(\Omega_X^1) = 0$. Therefore, we get many examples of semistable Higgs 
	bundles on $X$ whose underlying vector bundle is not semistable. 
\end{remark} 

\begin{remark}
	If $char(k) > 0$, it is expected that, 
	if $\Omega_X^1$ and all $E_i$ are strongly semistable, then $(E, \theta)$ is strongly semistable; 
	meaning that all the Frobenius pullbacks of $(E, \theta)$ are semistable. 
\end{remark}

We now give an example to show that a semistable Higgs bundle in Theorem \ref{thm-1} may not be stable, 
in general. 

\begin{example}
	Let $X$ be an irreducible smooth complex projective algebraic curve of genus $g \geq 1$. 
	Then $K_X := \Omega_X^1$ is a line bundle of degree $2g -2$ on $X$. 
	Let $Q = (\mathcal{O}_X(1))^{\otimes (g-1)}$ and set $E_1 = Q \oplus Q$. Then $E_1$ is a rank $2$ 
	strictly semistable vector bundle of degree $2g - 2$ on $X$. Take $E_0 := E_1\otimes K_X^{-1}$ and 
	define $E = E_0 \oplus E_1$. Clearly, $\deg(E) = 0$. 
	Fix $\alpha \in \Aut_X(E_1)$ and consider it as an $\mathcal{O}_X$--module isomorphism 
	$\alpha : E_1 \stackrel{\simeq}{\longrightarrow} E_0\otimes K_X$. Define an $\mathcal{O}_X$--module 
	homomorphism $\theta : E \longrightarrow E\otimes K_X$ by the matrix 
	\[
	\theta := 
	\begin{pmatrix}
	0 & \alpha \\ 
	0 & 0 
	\end{pmatrix} \,.
	\]
	Then $(E, \theta)$ is a system of Hodge bundles on $X$ satisfying all conditions in Theorem \ref{thm-1}. 
	So $(E, \theta)$ is a semistable Higgs bundle on $X$. Since $E_1$ is not stable, there is a line 
	subbundle $L_1$ of $E_1$ with $\deg(L_1) = \mu(E_1) = \deg(E_1)/2$. 
	Let $L_0 = \theta(L_1) \otimes K_X^{-1} \subset E_0$ and define $F := L_0 \oplus L_1$. 
	Then $\theta(F) \subset F\otimes K_X$ and 
	$$\deg(F) = \deg(L_0) + \deg(L_1) = 2\cdot\deg(L_1) - \deg(K_X) = \deg(E_1) -(2g - 2) = 0\,.$$ 
	Therefore, $(E, \theta)$ is not stable. 
\end{example}

Unless otherwise mentioned, from now on, we assume that $char(k) = 0$. 
\begin{lemma}\label{lem-3}
	Let $V$ be an unstable torsion free coherent sheaf of $\mathcal{O}_X$--modules on $X$. 
	Let $$0 = V_m \subset V_{m-1} \subset \cdots \subset V_0 = V$$ be the Harder-Narasimhan 
	filtration of $V$. Then for any semistable vector bundle $W$ on $X$, 
	$$0 = V_m \otimes W \subset V_{m-1} \otimes W \subset \cdots \subset V_0 \otimes W = V \otimes W$$ 
	is the Harder-Narasimhan filtration of $V \otimes W$. 
\end{lemma}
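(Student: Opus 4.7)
The plan is to invoke the uniqueness of the Harder-Narasimhan (HN) filtration, characterized as the unique filtration by subsheaves whose successive quotients are torsion-free semistable sheaves with strictly decreasing slopes. Three ingredients will feed into this characterization: exactness of $-\otimes W$, preservation of semistability under tensor product in characteristic zero, and additivity of slopes under tensoring with a locally free sheaf.

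First, since $W$ is locally free, tensoring by $W$ is exact and preserves torsion-freeness. Thus the proposed sequence is a chain of torsion-free subsheaves of $V \otimes W$ with strict inclusions, whose successive quotients are canonically identified with $(V_i/V_{i+1}) \otimes W$. In particular, no accidental collapses occur, and each graded piece is torsion-free.

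Next, each quotient $V_i/V_{i+1}$ is by definition a semistable torsion-free sheaf, and $W$ is a semistable vector bundle. In characteristic zero, the tensor product of two semistable torsion-free sheaves is again semistable; this is essentially the $\theta = \phi = 0$ case of Lemma \ref{lem-2} (equivalently, \cite[Corollary 3.8]{Si2}). Combined with the standard slope identity
$$\mu\bigl((V_i/V_{i+1}) \otimes W\bigr) \,=\, \mu(V_i/V_{i+1}) + \mu(W),$$
and the fact that the slopes $\mu(V_i/V_{i+1})$ are strictly decreasing along the HN filtration of $V$, one sees that the tensored filtration satisfies both defining properties of the HN filtration. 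By uniqueness, it coincides with the HN filtration of $V\otimes W$.

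The only substantive step --- and the only point at which the standing characteristic zero hypothesis is actually used --- is the preservation of semistability of $V_i/V_{i+1}$ upon tensoring with $W$. Everything else reduces to exactness of tensor product with a locally free sheaf and an elementary additivity of slopes, so this is where I would focus any attention and, if needed, cite the classical Maruyama--Simpson result rather than re-prove it.
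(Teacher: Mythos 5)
Your proof is correct and follows essentially the same route as the paper's: tensoring the Harder--Narasimhan filtration with the locally free sheaf $W$ is exact, the successive quotients become $(V_i/V_{i+1})\otimes W$, which are semistable (the characteristic-zero tensor-product theorem) with slopes shifted uniformly by $\mu(W)$ and hence still strictly decreasing, so uniqueness of the HN filtration finishes the argument. The paper is merely terser, citing \cite{HL} for the semistability of the tensor product where you cite the zero-Higgs-field case of Lemma \ref{lem-2} / \cite{Si2}; there is no substantive difference.
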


\begin{proof}
	For each $i \in \{0, 1, \cdots, m\}$, consider the exact sequence of coherent sheaves : 
	\begin{equation}\label{ex-seq-1}
	0 \longrightarrow V_{i+1} \longrightarrow V_i \longrightarrow V_i/V_{i+1} \longrightarrow 0\,. 
	\end{equation} 
	Since $W$ is locally free, tensoring \eqref{ex-seq-1} with $W$, we get 
	$$(V_i \otimes W)/(V_{i+1} \otimes W) \cong (V_i/V_{i+1}) \otimes W\,,\,\,\, 
	\forall\,\, i = 0, 1, \ldots, m-1 \,.$$ 
	Then the result follows from the fact that $(V_i/V_{i+1}) \otimes W$ is semistable 
	(see e.g., \cite{HL}) and 
	$\mu((V_i/V_{i+1}) \otimes W) = \mu((V_i/V_{i+1})) + \mu(W)$, for all $i = 0, 1, \ldots, m-1$. 
\end{proof}

\begin{lemma}\label{cor-1}
	Let $E$ and $F$ be two isomorphic unstable torsion free coherent sheaf of $\mathcal{O}_X$--modules 
	on $X$. Let $G \subset E$ be the maximal destabilizing subsheaf of $E$. Then for any two 
	$\mathcal{O}_X$--module isomorphisms $f_1 , f_2 : E \longrightarrow F$, we have $f_1(G) = f_2(G)$, 
	and this is the maximal destabilizing subsheaf of $F$. 
\end{lemma}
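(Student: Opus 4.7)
The plan is to reduce the statement to the uniqueness of the maximal destabilizing subsheaf of a torsion-free coherent sheaf, and then to observe that any $\mathcal{O}_X$--module isomorphism of torsion-free sheaves carries the maximal destabilizing subsheaf of the source onto the maximal destabilizing subsheaf of the target. Granted this, applying the observation to $f_1$ and to $f_2$ separately yields $f_1(G) = f_2(G) = $ the maximal destabilizing subsheaf of $F$.

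First, I would recall (or briefly reprove) the standard fact that every nonzero torsion-free coherent $\mathcal{O}_X$--module $V$ admits a unique maximal destabilizing subsheaf: namely, the smallest nonzero term of its Harder--Narasimhan filtration, or equivalently, the unique maximal (with respect to inclusion) semistable subsheaf of $V$ of slope $\mu_{\max}(V)$. Uniqueness follows from the usual sum--versus--intersection argument: if $G_1, G_2 \subset V$ were two such, the natural exact sequence
$$0 \to G_1 \cap G_2 \to G_1 \oplus G_2 \to G_1 + G_2 \to 0$$
together with semistability of $G_1, G_2$ and the slope bound $\mu(G_1 \cap G_2) \leq \mu_{\max}(V)$ forces $G_1 + G_2$ to be semistable of slope $\mu_{\max}(V)$ as well, and maximality then gives $G_1 = G_1 + G_2 = G_2$.

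Second, I would observe that an isomorphism $f : E \xrightarrow{\sim} F$ of torsion-free coherent sheaves induces a slope-preserving bijection between subsheaves of $E$ and subsheaves of $F$; in particular $\mu_{\max}(E) = \mu_{\max}(F)$. Then $f(G) \subset F$ is semistable (since it is isomorphic to $G$) with $\mu(f(G)) = \mu(G) = \mu_{\max}(E) = \mu_{\max}(F)$; and any strictly larger semistable subsheaf $H \supsetneq f(G)$ of $F$ of the same slope would correspond under $f^{-1}$ to a strictly larger semistable subsheaf $f^{-1}(H) \supsetneq G$ of $E$ of slope $\mu_{\max}(E)$, contradicting maximality of $G$. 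Hence $f(G)$ is the (unique) maximal destabilizing subsheaf of $F$.

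There is no serious obstacle. The only point requiring care is making the uniqueness of the maximal destabilizing subsheaf explicit; an equivalent reformulation is to pass to the automorphism $f_2^{-1} \circ f_1 \in \Aut_{\mathcal{O}_X}(E)$ and show that every $\mathcal{O}_X$--module automorphism of $E$ stabilizes $G$, which is again a direct consequence of the uniqueness.
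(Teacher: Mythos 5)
Your proof is correct and follows essentially the same route as the paper: the paper's proof is a one-line appeal to the fact that the maximal destabilizing subsheaf is invariant under all $\mathcal{O}_X$--module automorphisms, which is exactly the reformulation via $f_2^{-1}\circ f_1$ that you note at the end, and your argument simply supplies the standard uniqueness and slope-preservation details behind that fact.
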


\begin{proof}
	This follows from the fact that the maximal destabilizing subsheaf 
	is invariant under all $\mathcal{O}_X$--module automorphisms of the coherent sheaf. 
\end{proof}

\begin{theorem}\label{main-thm-1}
	Assume that $\Omega_X^1$ is semistable with $\deg(\Omega_X^1) \geq 0$. 
	Let $(E, \theta)$ be a Higgs bundle on $X$ admitting a structure of a system of Hodge bundles 
	given by $E = \bigoplus\limits_{i=0}^n E_i$ with 
	$\theta\vert_{E_i} : E_i \longrightarrow E_{i-1}\otimes\Omega_X^1$ isomorphisms, for all 
	$i = 1, \ldots, n$. Then $(E, \theta)$ is semistable if and only if $E_0$ is semistable. 
\end{theorem}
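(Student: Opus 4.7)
The plan is to prove both implications by reducing to the situation of Theorem~\ref{thm-1}, exploiting the isomorphism $E_i \cong E_0 \otimes (\Omega_X^1)^{\otimes i}$ that comes from iterating $\theta\vert_{E_i}$.

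\emph{Forward direction ($E_0$ semistable $\Rightarrow (E,\theta)$ semistable).} Since $\theta\vert_{E_i}$ is an isomorphism for every $i \geq 1$, induction gives $E_i \cong E_0 \otimes (\Omega_X^1)^{\otimes i}$ for all $i = 0, 1, \ldots, n$. In characteristic zero, the tensor product of semistable torsion-free sheaves on a polarized smooth projective variety is again semistable. Since $\Omega_X^1$ is semistable by assumption and $E_0$ is semistable by hypothesis, each $E_i$ is semistable. The conclusion then follows directly from Theorem~\ref{thm-1}.

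\emph{Reverse direction ($(E,\theta)$ semistable $\Rightarrow E_0$ semistable).} The strategy is to show that if $E_0$ is unstable, then its maximal destabilizing subsheaf propagates through the isomorphisms $\theta\vert_{E_i}$ to a $\theta$-invariant subsheaf of $E$ that destabilizes $(E,\theta)$. Assume for contradiction that $E_0$ is not semistable, and let $G_0 \subset E_0$ be its maximal destabilizing subsheaf, so $\mu(G_0) > \mu(E_0)$. For $i \geq 0$ define $G_i := G_0 \otimes (\Omega_X^1)^{\otimes i}$, viewed as a subsheaf of $E_i$ via the iterated isomorphism above. By Lemma~\ref{lem-3} (applied inductively with the semistable vector bundle $\Omega_X^1$), $G_i$ is the maximal destabilizing subsheaf of $E_i$ for each $i$. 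Now the isomorphism $\theta\vert_{E_i} : E_i \xrightarrow{\simeq} E_{i-1} \otimes \Omega_X^1$ must, by Lemma~\ref{cor-1}, send the maximal destabilizing subsheaf of the source to that of the target; but the maximal destabilizing subsheaf of $E_{i-1} \otimes \Omega_X^1$ is exactly $G_{i-1} \otimes \Omega_X^1$, again by Lemma~\ref{lem-3}. Hence $\theta(G_i) = G_{i-1} \otimes \Omega_X^1 \subset G_{i-1} \otimes \Omega_X^1$, so $F := \bigoplus_{i=0}^n G_i$ is a $\theta$-invariant Higgs subsheaf of $(E,\theta)$.

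It remains to verify $\mu(F) > \mu(E)$. Writing $d = \rk(\Omega_X^1)$ and using $G_i \cong G_0 \otimes (\Omega_X^1)^{\otimes i}$, the same computation as in \eqref{deg-E-p} and \eqref{rk-E-p} yields $\rk(G_i) = d^i \rk(G_0)$ and $\deg(G_i) = i\,d^{i-1}\deg(\Omega_X^1)\rk(G_0) + d^i \deg(G_0)$. Summing over $i$ and dividing, the terms involving $\deg(\Omega_X^1)$ produce the common quantity $\bigl(\sum_i i\,d^{i-1}\bigr)/\bigl(\sum_i d^i\bigr)\cdot \deg(\Omega_X^1)$ for both $\mu(F)$ and $\mu(E)$, giving the clean identity $\mu(F) - \mu(E) = \mu(G_0) - \mu(E_0) > 0$. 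This contradicts the semistability of $(E,\theta)$, so $E_0$ must be semistable.

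The main technical point is the reverse direction, and specifically the verification that $\theta$ carries $G_i$ into $G_{i-1}\otimes\Omega_X^1$; here the key input is the uniqueness of the maximal destabilizing subsheaf (Lemma~\ref{cor-1}) combined with the identification of that subsheaf in $E_i$ and $E_{i-1}\otimes\Omega_X^1$ via Lemma~\ref{lem-3}. The slope comparison is then essentially a telescoping that mirrors the computation already done in the proof of Theorem~\ref{thm-1}.
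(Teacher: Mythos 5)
Your proposal is correct and follows essentially the same route as the paper: the forward direction reduces to Theorem~\ref{thm-1} via $E_i \cong E_0\otimes(\Omega_X^1)^{\otimes i}$, and the reverse direction assembles the maximal destabilizing subsheaves of the $E_i$ into a $\theta$-invariant destabilizing subsheaf using Lemmas~\ref{lem-3} and~\ref{cor-1}, exactly as in the paper. Your closing slope computation, phrased as the exact identity $\mu(F)-\mu(E)=\mu(G_0)-\mu(E_0)$, is a slightly cleaner packaging of the paper's inequality chain but is the same calculation.
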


\begin{proof}
	Since $E_p \cong E_0 \otimes (\Omega_X^1)^{\otimes p}$, for all $p = 0, 1, \ldots, n$, 
	and $\Omega_X^1$ is semistable, for any $p \in \{0, 1, \cdots, n\}$ we have,
	$E_p$ is semistable if and only if $E_0$ is semistable. Therefore, if $E_0$ is semistable, 
	then $(E, \theta)$ is semistable by Theorem \ref{thm-1}. We now show the converse part. 
	
	Let $(E, \theta)$ be semistable. Tensoring $E$ with a sufficiently large degree line bundle, if 
	required, we may assume that $\deg(E_p) > 0$, for all $p = 0, 1, \ldots, n$. 
	Suppose that, $E_0$ is not semistable. Let $F_p \subset E_p$ 
	be the maximal destabilizing subsheaf of $E_p$, for all $p = 0, 1, \ldots, n$. 
	Since $\theta\vert_{E_p} : E_p \to E_{p-1}\otimes \Omega_X^1$ is an isomorphism, it follows from 
	Lemma \ref{lem-3} and Lemma \ref{cor-1} that $\theta(F_p) = F_{p-1}\otimes \Omega_X^1$, 
	for all $p = 0, 1, \ldots, n$. Therefore, we have 
	\begin{equation}\label{eqn-3}
	\rk(E_p) = d^p \cdot \rk(E_{0})\,\,\,\, \textnormal{and}\,\,\,\, 
	\rk(F_p) = d^p \cdot \rk(F_{0})\,,\,\, \forall\,\,\, p = 0, 1, \ldots, n\,, 
	\end{equation}
	where $d = \rk(\Omega_X^1) = \dim(X)$. 
	Clearly $F = \bigoplus\limits_{p=0}^n F_p$ is a Higgs subsheaf of $(E, \theta)$. 
	Now from \eqref{eqn-3} we have, 
	\begin{equation*}
	\deg(F) = \sum\limits_{p=0}^n \deg(F_p) > 
	\frac{\rk(F_0)}{\rk(E_0)} \sum\limits_{p=0}^n \deg(E_p) = \frac{\rk(F_0)}{\rk(E_0)} \deg(E)\,. 
	\end{equation*}
	Therefore, $\mu(F) > \mu(E)$, which contradicts the fact that $(E, \theta)$ is semistable. 
\end{proof}

\subsection{Criterion for stability of a system of Hodge bundles}
Let $char(k) \geq 0$. 
\begin{definition}
	A Higgs bundle $(E, \theta)$ is said to be \textit{simple} if any non-zero Higgs endomorphism 
	of $(E, \theta)$ is an isomorphism. 
\end{definition}

\begin{proposition}\label{simple}
	Assume that $\deg(\Omega_X^1) > 0$. 
	Let $(E, \theta)$ be a Higgs bundle on $X$ having a structure of a system of Hodge bundles: 
	$E = \bigoplus\limits_{p=0}^n E_p$, with $\theta\vert_{E_p} : E_p \to E_{p-1}\otimes \Omega_X^1$ 
	isomorphism, for all $p = 1, \ldots, n$. If $E_p$ is stable, for all $p = 0, 1, \ldots, n$, then 
	$(E, \theta)$ is simple. 
\end{proposition}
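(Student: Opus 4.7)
The plan is to take a nonzero Higgs endomorphism $\varphi:(E,\theta)\to(E,\theta)$ and show that $\varphi$ must be a nonzero scalar multiple of $\Id_E$, hence an isomorphism. Using the decomposition $E=\bigoplus_{p=0}^n E_p$, I write $\varphi=\sum_{i,j}\varphi_{ij}$ with $\varphi_{ij}:E_j\to E_i$. Comparing components landing in each $E_l\otimes\Omega_X^1$, the Higgs compatibility $\theta\circ\varphi=(\varphi\otimes\Id_{\Omega_X^1})\circ\theta$ is equivalent to the family of identities
\begin{equation*}
\theta\vert_{E_i}\circ\varphi_{i,j}=(\varphi_{i-1,j-1}\otimes\Id_{\Omega_X^1})\circ\theta\vert_{E_j}
\end{equation*}
(with the conventions $\theta\vert_{E_0}=0$ and $\varphi_{-1,\ast}=0$). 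The strategy is to show $\varphi_{ij}=0$ for $i\neq j$ and then to pin down the diagonal part.

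For $j>i$ I would use a slope argument. Since $\theta\vert_{E_p}$ is an isomorphism for every $p\geq 1$, induction gives $E_p\cong E_0\otimes(\Omega_X^1)^{\otimes p}$; twisting a stable sheaf by a line bundle preserves stability, so each $E_p$ is stable. Any nonzero morphism between stable sheaves $A$ and $B$ forces $\mu(A)\leq\mu(B)$, but $\mu(E_j)-\mu(E_i)=(j-i)\deg(\Omega_X^1)>0$ whenever $j>i$, by the hypothesis $\deg(\Omega_X^1)>0$; hence $\varphi_{ij}=0$ for $j>i$. For $i>j$ I would proceed by induction on $j$. At $j=0$ the right-hand side of the compatibility vanishes because $\theta\vert_{E_0}=0$, so $\theta\vert_{E_i}\circ\varphi_{i,0}=0$, and invertibility of $\theta\vert_{E_i}$ for $i\geq 1$ forces $\varphi_{i,0}=0$. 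The inductive step is the same: if $\varphi_{i-1,j-1}=0$, the identity forces $\theta\vert_{E_i}\circ\varphi_{i,j}=0$, and invertibility of $\theta\vert_{E_i}$ gives $\varphi_{i,j}=0$.

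It remains to analyze the block-diagonal piece $\bigoplus_i\varphi_{ii}$. Stability of $E_i$ over the algebraically closed field $k$ gives $\End(E_i)=k$, so $\varphi_{ii}=c_i\,\Id_{E_i}$ for some $c_i\in k$. Substituting into the compatibility with $j=i$ yields $c_i\,\theta\vert_{E_i}=c_{i-1}\,\theta\vert_{E_i}$, and since $\theta\vert_{E_i}\neq 0$ for $i\geq 1$, I conclude $c_0=c_1=\cdots=c_n=:c$, whence $\varphi=c\cdot\Id_E$. The assumption $\varphi\neq 0$ forces $c\neq 0$, so $\varphi$ is an isomorphism and $(E,\theta)$ is simple. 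The only real subtlety is to notice that the boundary condition $\theta\vert_{E_0}=0$ seeds the induction for the case $i>j$; once that observation is in place, the rest of the argument is essentially formal, and in particular avoids any characteristic restriction.
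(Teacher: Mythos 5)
Your argument is correct and takes essentially the same route as the paper's proof: block-decompose $\varphi$, kill the blocks $\varphi_{ij}$ with $i>j$ by induction on $j$ using the Higgs compatibility and the injectivity of $\theta\vert_{E_i}$, kill the blocks with $i<j$ by the slope inequality between stable summands, reduce the diagonal to scalars via $\End(E_i)=k$, and equate the scalars through $\theta$. The only blemish is your aside deriving stability of $E_p$ from $E_0$ by ``twisting by a line bundle'': it is unnecessary (stability of every $E_p$ is a hypothesis) and inaccurate when $\dim X>1$, since $(\Omega_X^1)^{\otimes p}$ is then not a line bundle, but nothing in your proof depends on it.
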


\begin{proof}
	Since $\theta(E_p) \subseteq E_{p-1}\otimes \Omega_X^1$, for all $p = 0, 1, \ldots, n$, the matrix 
	of $\theta$ is strictly block-upper triangular and of the form : 
	\[
	\theta = \begin{pmatrix}
	0 & \theta_{01} & 0 & \cdots & 0 \\ 
	0 & 0 & \theta_{12} & \cdots & 0 \\
	\vdots & \vdots & \vdots &\ddots & \vdots \\ 
	0 & 0 & 0 & \cdots & \theta_{n-1, n} \\
	0 & 0 & 0 & \cdots & 0 
	\end{pmatrix}\,,
	\]
	where $\theta_{ij} \in \mathrm{Iso}_{\mathcal{O}_X}(E_j, E_{i}\otimes\Omega_X^1)$, for all $i, j$. 
	Let $\varphi : E \longrightarrow E$ be a non-zero $\mathcal{O}_X$--module homomorphism with 
	\begin{equation}\label{phi-theta}
		\theta\circ \varphi = (\varphi \otimes \Id_{\Omega_X^1}) \circ \theta\,. 
	\end{equation} 
	For any $i, j \in \{0, 1, \cdots, n\}$, let $\varphi_{ij}$ be the composite homomorphism 
	$$\varphi_{ij} : E_j \hookrightarrow E \stackrel{\varphi}{\longrightarrow} E 
	\stackrel{\pi_i}{\longrightarrow} E_i\,,$$
	where $\pi_i$ is the projection of $E$ onto the $i$-th factor. 
	It follows from \eqref{phi-theta} that the matrix of 
	$\varphi$ is block-upper triangular :  
	\[
	\varphi = \begin{pmatrix}
	\varphi_{00} & \varphi_{01} & \cdots & \varphi_{0n} \\ 
	0 & \varphi_{11} & \cdots & \varphi_{1n} \\
	\vdots & \vdots & \ddots & \vdots \\ 
	0 & 0 & \cdots & \varphi_{nn} 
	\end{pmatrix}
	\]
	Since $\mu(E_i) = \mu(E_0) + i \cdot \mu(\Omega_X^1)$, and $E_i$ are stable by assumption, 
	$\varphi_{ii} = \lambda_{i} \Id_{E_i}$, for some $\lambda_i \in k$, for all 
	$i = 0, 1, \ldots, n$, and $\varphi_{ij} = 0$, for all $i < j$. 
	Therefore, $\varphi$ is the diagonal matrix 
	$\varphi  = \mathrm{diag}(\varphi_{00}, \cdots, \varphi_{nn})$. 
	It follows from the relation \eqref{phi-theta} that 
	$\lambda_0 = \lambda_1 = \cdots = \lambda_n$. 
	Since $\varphi \neq 0$, we must have $\lambda_i \neq 0$, and hence $\varphi$ is an isomorphism. 
\end{proof}

\begin{definition}
	A semistable Higgs bundle is said to be \textit{polystable} if it is a direct sum of stable Higgs bundles. 
\end{definition} 

Every semistable Higgs sheaf $(E, \theta)$ contains a unique maximal polystable Higgs subsheaf, 
called the \textit{socle} of $(E, \theta)$. The socle of $(E, \theta)$ is invariant under all 
Higgs automorphisms of $E$. 

\begin{remark}
	Note that, a simple polystable Higgs bundle is necessarily stable. 
\end{remark}

\begin{theorem}
	Assume that $char(k) \geq 0$, and $\deg(\Omega_X^1) > 0$. 
	Let $(E, \theta)$ be a Higgs bundle on $X$ having a structure of a system of Hodge bundles: 
	$E = \bigoplus\limits_{p=0}^n E_p$, with $\theta\vert_{E_p} : E_p \to E_{p-1}\otimes \Omega_X^1$ 
	isomorphism, for all $p = 1, \ldots, n$. If $E_p$ is stable, for each $p = 0, 1, \ldots, n$, then 
	$(E, \theta)$ is stable. 
\end{theorem}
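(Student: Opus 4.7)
The plan is to derive stability of $(E,\theta)$ by strengthening each step of the proof of Theorem~\ref{thm-1}: stability (rather than merely semistability) of each $E_p$ and the strict positivity of $\deg(\Omega_X^1)$ turn two of the inequalities in that proof into strict inequalities, and together they force any potential destabilizing Higgs subsheaf to coincide with $E$ itself.

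Concretely, Theorem~\ref{main-thm-1} already gives that $(E,\theta)$ is semistable. I would then assume for contradiction the existence of a proper nonzero Higgs subsheaf $F \subsetneq E$ with $\mu(F) \geq \mu(E)$, invoke \cite[Lemma~2.4]{LSYZ} to decompose $F = \bigoplus_p F_p$ with $F_p = F \cap E_p$, and use \eqref{F-p} to conclude that the indices with $F_p \neq 0$ form an initial segment $\{0, 1, \ldots, r\}$. In the inequality chain \eqref{Deg-F-p}--\eqref{eqn3} of the proof of Theorem~\ref{thm-1}, the opening estimate $\deg(F_p) \leq \rk(F_p)\cdot \mu(E_p)$ used only semistability of $E_p$; with $E_p$ stable it is strict whenever $0 \neq F_p \subsetneq E_p$. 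Summing $r+1$ non-strict inequalities with at least one strict gives a strict sum, and the subsequent Chebyshev (Lemma~\ref{chevy}) and Lemma~\ref{ineq-lem} manipulations are monotone, so any such strict input propagates to $\mu(F) < \mu\bigl(\bigoplus_{p \leq r} E_p\bigr) \leq \mu(E)$, a contradiction. Hence $F_p = E_p$ for every $0 \leq p \leq r$, and therefore $F = \bigoplus_{p=0}^r E_p$.

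Since $F \subsetneq E$ forces $r < n$, I would close by observing that $\mu(E_p) = \mu(E_0) + p\cdot \mu(\Omega_X^1)$ is now \emph{strictly} increasing in $p$ (using $\deg(\Omega_X^1) > 0$), which upgrades Lemma~\ref{ineq-lem} to a strict inequality whenever $r < n$ and yields $\mu(F) < \mu(E)$, the desired final contradiction. The main obstacle I anticipate is confirming that strictness genuinely propagates through the Chebyshev inequalities, since those inequalities admit equality whenever one of the two sequences involved happens to be constant; this is handled by placing the strict step at the very first summation (before Chebyshev is applied), so that the running estimate is already strict and the later monotone steps cannot undo it, while the second source of strictness (from $\deg(\Omega_X^1) > 0$) covers the complementary case in which every $F_p$ equals either $0$ or $E_p$.
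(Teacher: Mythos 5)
There is a genuine gap at the decomposition step. You apply \cite[Lemma 2.4]{LSYZ} to an \emph{arbitrary} Higgs subsheaf $F \subsetneq E$ with $\mu(F) \geq \mu(E)$ in order to write $F = \bigoplus_p (F \cap E_p)$. That lemma, and the paper's use of it in the proof of Theorem \ref{thm-1}, applies only to \emph{canonical} subsheaves --- the maximal destabilizing subsheaf, or more generally the terms of the Harder--Narasimhan filtration --- because such subsheaves are preserved by every Higgs automorphism of $(E,\theta)$, in particular by the $\mathbb{G}_m$-action $\lambda_t\vert_{E_p} = t^p\cdot\Id$, which conjugates $\theta$ to $t\theta$; invariance under this action is exactly what forces a subsheaf to be graded. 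A general $\theta$-invariant subsheaf of maximal slope need not be graded (for instance, if $(E,\theta)$ were polystable with two isomorphic stable summands, the graph of an isomorphism between them is a slope-maximal Higgs subsheaf that is not the sum of its intersections with the $E_p$). For \emph{semi}stability this issue is invisible, since one may always test against the single canonical maximal destabilizing subsheaf; but for stability there is no one canonical subsheaf witnessing every possible failure, so your argument does not cover all destabilizing $F$.

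The paper routes around precisely this difficulty in two steps. First it runs your strictified inequality chain only against the \emph{socle} of $(E,\theta)$ (the maximal polystable Higgs subsheaf), which \emph{is} canonical and hence graded, and concludes that the socle cannot be proper, i.e.\ that $(E,\theta)$ is polystable. It then invokes Proposition \ref{simple} (stability of every $E_p$ together with $\deg(\Omega_X^1)>0$ forces $(E,\theta)$ to be simple) and the observation that a simple polystable Higgs bundle is stable. Your strictness analysis --- strictness of \eqref{Deg-F-p} at some index when some $F_p$ is a proper nonzero subsheaf of $E_p$, propagation of that strictness through the Chebyshev steps, and strictness of Lemma \ref{ineq-lem} when $r < n$ and $\deg(\Omega_X^1) > 0$ --- is sound as far as it goes (modulo first replacing $F$ by its saturation, so that a full-rank proper $F_p$ cannot occur in higher dimension), and it is essentially what the paper does for the socle. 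What is missing is the simplicity argument, or some substitute, to dispose of Higgs subsheaves that are not graded.
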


\begin{proof}
	Clearly $(E, \theta)$ is semistable by Theorem \ref{thm-1}. Suppose that $(E, \theta)$ is not stable. 
	Then its socle $(F, \theta_F) \subset (E, \theta)$ is the unique non-zero proper maximal polystable 
	Higgs subsheaf with $\mu(F) = \mu(E)$. Clearly, $(F, \theta_F)$ is 
	invariant under the $\mathbb{G}_m$--action on $(E, \theta)$. Therefore, $(F, \theta_F)$ admits a 
	structure of a system of Hodge bundles, say $F = \bigoplus\limits_{i=0}^n F_i$, with 
	$\theta_F(F_i) \subseteq F_{i-1}\otimes\Omega_X^1$, for all $i = 0, 1, \ldots, n$. 
	It follows from the proof of \cite[Lemma 2.4]{LSYZ} that, $F_i = F \cap E_i$, for all 
	$i = 0, 1, \ldots, n$. Since $\theta\vert_{E_p}$ is an isomorphism, we have 
	$F_p \cong \theta(F_p) \subseteq F_{p-1} \otimes \Omega_X^1$, for all $p = 1, \ldots, n$. 
	Let $r \in \{0, 1, \cdots, n\}$ be the largest integer such that $F_r \neq 0$. 
	Then $F = \bigoplus\limits_{p=0}^r F_p$. 
	Since $F$ is a proper subsheaf of $E$, there is \textbf{at least one $p \in \{0, 1, \cdots, r\}$} 
	such that $F_p \neq E_p$. Since all $E_p$ are stable, we have, 
	\begin{equation}\label{deg-F-p}
		\deg(F_p) \leq \frac{\rk(F_p)\cdot \deg(E_p)/d^p}{\rk(E_0)}\,,\,\,\, \forall\,\, p = 0, 1, \ldots, r\,, 
	\end{equation}
	and the \textbf{inequality \eqref{deg-F-p} is strict for at least one $p \in \{0, 1, \cdots, r\}$}. 
	Then from \eqref{deg-F-p}, following the inequality computations as in proof of Theorem \ref{thm-1}, 
	we have 
	$$\mu(F) < \mu\left(\bigoplus\limits_{p=0}^r E_p \right) \leq \mu(E)\,,$$
	which contradicts the fact that $\mu(F) = \mu(E)$. Therefore, $(E, \theta)$ is polystable. 
	Then by Proposition \ref{simple}, $(E, \theta)$ is stable. 
\end{proof}

\begin{theorem}\label{main-thm-2}
	Assume that $char(k) = 0$. 
	Let $X$ be an irreducible smooth projective algebraic curve of genus $g \geq 2$. 
	Let $(E, \theta)$ be a Higgs bundle on $X$ admitting a structure of a system of 
	Hodge bundles : $E = \bigoplus\limits_{p=0}^n E_p$ with 
	$\theta\vert_{E_p} : E_p \longrightarrow E_{p-1}\otimes\Omega_X^1$ an isomorphism, for all 
	$p = 1, \ldots, n$. Then $(E, \theta)$ is stable if and only if $E_p$ is stable, 
	for all $p = 0, 1, \ldots, n$. 
\end{theorem}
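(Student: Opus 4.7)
The plan is to handle the two implications separately. The forward direction---stability of each $E_p$ implies stability of $(E,\theta)$---follows immediately from the stability theorem preceding Theorem \ref{main-thm-2}: on a curve of genus $g \geq 2$ we have $\deg(\Omega_X^1) = 2g - 2 > 0$, so its hypotheses are satisfied and $(E,\theta)$ is stable. No additional work is required.

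For the converse, my approach is first to reduce the claim to stability of $E_0$ alone. Since $X$ is a curve, $\Omega_X^1$ is a line bundle (hence automatically stable), and iterating the isomorphisms $\theta\vert_{E_p} : E_p \stackrel{\simeq}{\longrightarrow} E_{p-1} \otimes \Omega_X^1$ yields $E_p \cong E_0 \otimes (\Omega_X^1)^{\otimes p}$. Because tensoring by a line bundle preserves stability, it suffices to show that $E_0$ is stable.

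Next, I would establish semistability of $E_0$ cheaply: stability of $(E,\theta)$ implies semistability of $(E,\theta)$, which by Theorem \ref{main-thm-1} (applicable since $\Omega_X^1$ is semistable of non-negative degree) forces $E_0$ to be semistable. The crux is then to rule out strict semistability of $E_0$. Assuming for contradiction a proper nonzero subsheaf $F_0 \subsetneq E_0$ with $\mu(F_0) = \mu(E_0)$, I would lift $F_0$ along the chain of isomorphisms by setting recursively $F_p := (\theta\vert_{E_p})^{-1}(F_{p-1} \otimes \Omega_X^1)$ for $p = 1, \ldots, n$, so that $F_p \cong F_0 \otimes (\Omega_X^1)^{\otimes p}$. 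The direct sum $F := \bigoplus_{p=0}^n F_p$ then satisfies $\theta(F) \subseteq F \otimes \Omega_X^1$ by construction, and is a proper subsheaf of $E$ since $F_0 \subsetneq E_0$.

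The final step is a routine slope computation: because all $F_p$ share rank $\rk(F_0)$ and $\mu(F_p) = \mu(F_0) + p\cdot\deg(\Omega_X^1) = \mu(E_0) + p\cdot\deg(\Omega_X^1) = \mu(E_p)$, the arithmetic-mean formula gives
\[
\mu(F) = \mu(F_0) + \tfrac{n}{2}\deg(\Omega_X^1) = \mu(E_0) + \tfrac{n}{2}\deg(\Omega_X^1) = \mu(E),
\]
contradicting the stability of $(E,\theta)$. Hence $E_0$ is stable, and the proof is complete. The main obstacle---mild, but the one place care is required---is ensuring that the lifted sheaves $F_p$ assemble into a genuine $\theta$-invariant subsheaf with exactly the same slope as $E$; this is precisely where having actual isomorphisms (not merely injections) on each graded piece, together with $\Omega_X^1$ being a line bundle on the curve, enters decisively.
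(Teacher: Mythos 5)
Your proposal is correct and follows essentially the same route as the paper: the forward implication is delegated to the preceding stability theorem (valid since $\deg(\Omega_X^1) = 2g-2 > 0$), and the converse reduces to $E_0$ via the isomorphisms $E_p \cong E_0 \otimes (\Omega_X^1)^{\otimes p}$, gets semistability from Theorem \ref{main-thm-1}, and derives a contradiction by lifting an equal-slope subsheaf $F_0 \subsetneq E_0$ through the chain of isomorphisms to a $\theta$-invariant subsheaf of slope $\mu(E)$. The only cosmetic difference is that the paper takes $G_0$ stable (from the Jordan--H\"older filtration) while you allow any equal-slope proper subsheaf; both work.
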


\begin{proof}
	Suppose that, $(E, \theta)$ is stable. It follows from Theorem \ref{main-thm-1} that $E_p$ is 
	semistable, for all $p = 0, 1, \ldots, n$. Since $K_X := \Omega_X^1$ is a line bundle on $X$, 
	for any $p \in \{0, 1, \cdots, n\}$ we see that, $E_p$ stable if and only if $E_0$ is stable. 
	Suppose that $E_0$ is not stable. 
	Then there is a non-zero proper stable subsheaf $G_0 \subset E_0$ with $\mu(G_0) = \mu(E_0)$. 
	Since $\theta^p$ is an isomorphism of $E_p$ onto $E_0\otimes K_X^{\otimes p}$, 
	for all $p$, there is a subsheaf $G_p \subset E_p$ such that 
	$\theta^p : G_p \longrightarrow G_0 \otimes K_X^{\otimes p}$ is isomorphism, 
	for all $p = 0, 1, \ldots, n$. Then $G = \bigoplus\limits_{p=0}^n G_p$ is a Higgs subsheaf of 
	$(E, \theta)$. Now a similar computation as in the proof of Theorem \ref{main-thm-1} 
	shows that $\mu(G) = \mu(E)$, which contradicts the fact that $(E, \theta)$ is stable. 
\end{proof}

\begin{remark}
	It is expected that for $\dim_k(X) \geq 2$ with $\Omega_X^1$ is stable and $\deg(\Omega_X^1) > 0$, 
	if $(E, \theta)$ in Theorem \ref{main-thm-2} is stable then all $E_p$ are polystable. 
\end{remark}

\begin{remark}
	Note that, in the proofs of all Theorems in this Section, we have used only semistability (or stability) 
	of $\Omega_X^1$ and the condition $\deg(\Omega_X^1) \geq 0\,\,(\, > 0)$. Therefore, with appropriate 
	notion of semistability and stability of pairs $(E, \theta)$ with $\theta \in H^0(X, \sEnd(E) \otimes V)$, 
	all Theorems in this Section \ref{sec-3} hold if we replace $\Omega_X^1$ with any semistable (or stable) 
	vector bundle $V$ on $X$ of degree $\geq 0$ (or, $>0$). 
\end{remark}

\subsection{Examples of unstable system of Hodge bundles} 
We now give two examples to show that the isomorphism conditions in Theorem \ref{thm-1} are crucial. 

\begin{example}
	Let $X$ be a smooth complex projective curve of genus $g \geq 2$. 
	Let $L_0$ be a line bundle of degree $d > 2g - 2$ on $X$. 
	Let $E_1$ be a non-trivial extension of $\mathcal{O}_X$ and $L_0\otimes K_X$. 
	So we have a short exact sequence of $\mathcal{O}_X$--modules 
	$$0 \longrightarrow \mathcal{O}_X \longrightarrow E_1 
	\stackrel{\theta_1}{\longrightarrow} L_0\otimes K_X \longrightarrow 0\,.$$ 
	Then $E_1$ is semistable, but not necessarily stable. 
	Let $E = E_0 \bigoplus E_1$, where $E_0 = L_0$. 
	Then $\deg(E) = \deg(E_0) + \deg(E_1) = 2d + 2g - 2$, and hence 
	$\mu(E) = 2(d + g -1)/3$. Since $d > 2g -2$, we have $\mu(E) < d = \mu(L_0)$. 
	Define a Higgs field $\theta \in H^0(X, \sEnd(E)\otimes K_X)$ by 
	\[
	\theta = \begin{pmatrix}
	0 & \theta_1 \\
	0 & 0
	\end{pmatrix}\,.
	\]
	Then $(E, \theta)$ is a Higgs bundle having a structure of a system of Hodge bundles on $X$. 
	Note that, \textcolor{black}{\textbf{$\theta_1$ is surjective, but not isomorphism}}. 
	Since $L_0$ is a $\theta$--invariant, $(E, \theta)$ is not semistable.
\end{example}

\begin{example}
	Let $X$ be a smooth projective curve of genus $g \geq 2$ over $k$. Let $L_0$ be a line 
	bundle on $X$ of positive degree. Let $L_1 = L_0^{\vee}$ and $E = L_0 \bigoplus L_1$. Since 
	$\deg(\sHom(L_1, L_0\otimes K_X)) = 2\cdot \deg(L_0) + (2g-2)$, choosing $L_0$ with $\deg(L_0)$ 
	sufficiently large, we can find a non-zero $\mathcal{O}_X$--module homomorphism 
	$\theta_1 : L_1 \longrightarrow L_0\otimes K_X$. 
	Note that, \textcolor{black}{\textbf{$\theta_1$ is injective}}, because both 
	$L_1$ and $L_0\otimes K_X$ are line bundles, but \textcolor{black}{\textbf{$\theta_1$ is not 
	an isomorphism}}. Define an $\mathcal{O}_X$--module homomorphism 
	$\theta : E \longrightarrow E\otimes K_X$ by 
	\[
	\theta := \begin{pmatrix}
	0 & \theta_1 \\ 
	0 & 0 
	\end{pmatrix}\,.
	\]
	Then $(E, \theta)$ is a Higgs bundle having a structure of a system of Hodge bundles on $X$. 
	Since $L_0$ is a $\theta$--invariant subbundle of positive degree, $(E, \theta)$ is not semistable. 
\end{example}

We now shows that, if all $E_p$ are not semistable $(E, \theta)$ may fail to be semistable. 

\begin{example}
	Let $X$ be a smooth complex projective curve of genus $g \geq 2$. Fix a square root $K^{1/2}$ of 
	the cotangent bundle $K_X$ on $X$. Let $L_0$ be a positive degree line bundle on $X$. 
	Consider the line bundles $L_1 = \left(L_0^{\vee}\otimes K_X^{-1/2}\right)^{\otimes 2}$ and 
	$L_2 = L_0 \otimes K_X$ on $X$. Let $E = E_0 \bigoplus E_1$, where $E_0 = L_0$ and 
	$E_1 = L_1 \bigoplus L_2$. Clearly, $\deg(E) = 0$. 
	Consider the $\mathcal{O}_X$--module homomorphism 
	$\theta : E \longrightarrow E \otimes K_X$ defined by 
	\[
	\theta = \begin{pmatrix}
	0 & \theta_1 \\ 
	0 & 0 
	\end{pmatrix}\,, 
	\]
	where $\theta_1 : E_1 = L_1\bigoplus (L_0 \otimes K_X) \longrightarrow E_0 \otimes K_X$ is the projection 
	homomorphism onto the second factor. Then $(E, \theta)$ is a Higgs bundle of degree $0$ having a 
	structure of a system of Hodge bundles on $X$. Note that, 
	\textcolor{black}{\textbf{$E_1$ is not semistable}}, and 
	$\theta\vert_{E_1} : E_1 \to E_0\otimes K_X$ is surjective, but not isomorphism. 
	Since $E_0$ is a $\theta$--invariant subbundle of positive degree, 
	$(E, \theta)$ is not semistable. 
\end{example}

\section{Generalized Oper} 
\subsection{Semistability of generalized oper}
It is not know if the Higgs bundle $(\gr(\mathcal{F}^{\bullet}(E)), \theta_{\nabla})$, defined in 
Section \ref{as-higgs-bun} associated to a Griffiths transversal filtration $\mathcal{F}^{\bullet}(E)$ 
with respect to a flat connection $\nabla$ on $E$, is semistable or not. In this section, we give a 
criterion for semistability of $(\gr(\mathcal{F}^{\bullet}(E)), \theta_{\nabla})$. 

\begin{definition}\label{oper}
	An \textit{oper} is a triple $(E, \mathcal{F}^{\bullet}(E), \nabla)$ consists of a   vector bundle 
	$E$ on $X$ together with a   flat connection $\nabla$ and a Griffiths transversal filtration 
	$\mathcal{F}^{\bullet}(E)$ (see \eqref{filtration}) of $E$ by its   subbundles 
	(see Definition \ref{G-transv}), such that the quotients $\mathcal{F}^i(E)/\mathcal{F}^{i+1}(E)$ 
	are line bundles on $X$ and the $\mathcal{O}_X$--linear homomorphisms $\theta_{\nabla}^i$ in 
	\eqref{theta-i} induced by $\nabla$ are isomorphisms, for all $i = 1, \ldots, n-1$. 
\end{definition}

Let us first recall the following well-known result. 
\begin{proposition}\cite[p.~186]{Si}\label{prop-1}
	Let $X$ be a connected smooth complex projective curve of genus $g \geq 1$. 
	Let $(E, \mathcal{F}^{\bullet}(E), \nabla)$ be an oper on $X$. 
	Then the associated Higgs bundle $(\gr(\mathcal{F}^{\bullet}(E)), \theta_{\nabla})$ 
	on $X$ is semistable. 
\end{proposition}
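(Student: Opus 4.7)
The plan is to apply Theorem \ref{thm-1} to the associated graded Higgs bundle; the argument reduces to verifying that the hypotheses of that theorem are met, after translating the oper axioms into the language of systems of Hodge bundles.

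First I would record the ambient data. Since $X$ is a curve, $\Omega_X^1 = K_X$ is a line bundle, hence automatically semistable, and the assumption $g \geq 1$ gives $\deg(\Omega_X^1) = 2g - 2 \geq 0$, supplying the degree hypothesis of Theorem \ref{thm-1}. Next, by Definition \ref{oper}, each successive quotient $E_i := \gr^i(\mathcal{F}^{\bullet}(E))$ is a line bundle on $X$, hence trivially semistable, and the induced $\mathcal{O}_X$--linear maps $\theta_{\nabla}^i : E_i \to E_{i-1} \otimes \Omega_X^1$ are isomorphisms for all $i = 1, \ldots, n-1$.

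It remains to package this information as a system of Hodge bundles. The pair $(\gr(\mathcal{F}^{\bullet}(E)), \theta_{\nabla})$ is genuinely a Higgs bundle because the flatness of $\nabla$ forces $\theta_{\nabla} \wedge \theta_{\nabla} = 0$, as recorded in Section \ref{as-higgs-bun}; and the decomposition $\gr(\mathcal{F}^{\bullet}(E)) = \bigoplus_i E_i$ with $\theta_{\nabla}(E_i) \subseteq E_{i-1} \otimes \Omega_X^1$ exhibits it as a system of Hodge bundles in the sense of Section \ref{sec-3}. After a harmless re-indexing to match the convention of Theorem \ref{thm-1}, every hypothesis of that theorem is met, and its conclusion delivers semistability of $(\gr(\mathcal{F}^{\bullet}(E)), \theta_{\nabla})$.

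There is essentially no substantive obstacle in the present proposition: the real work has already been done in Theorem \ref{thm-1}, whose proof uses the Chebyshev sum inequalities (Lemma \ref{chevy} and Lemma \ref{ineq-lem}) to control the slope of an arbitrary Higgs-invariant destabilizing subsheaf. In fact, the same reasoning would give the analogous higher-dimensional statement whenever $\Omega_X^1$ is semistable of non-negative degree and each $\gr^i$ is semistable; the curve hypothesis is used only to guarantee semistability of $\Omega_X^1$ and of the line bundle quotients $\gr^i$ for free.
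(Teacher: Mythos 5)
Your proposal is correct and matches the paper's own route: the paper obtains Proposition \ref{prop-1} as a special case of Theorem \ref{thm-2}, which is itself proved by feeding the oper data (line-bundle quotients, hence semistable; $\theta_{\nabla}^i$ isomorphisms; $\Omega_X^1$ a line bundle of degree $2g-2 \geq 0$) into Theorem \ref{thm-1}, exactly as you do. Your closing observation that the argument generalizes to higher dimensions under semistability of $\Omega_X^1$ and of the graded pieces is precisely the content of the paper's Definition \ref{gen-oper} and Theorem \ref{thm-2}.
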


We give a natural generalization of the above result for higher ranks and higher dimensional 
algebraic varieties. 

\begin{definition}\label{gen-oper}
	A \textit{generalized oper} is a triple $(E, \mathcal{F}^{\bullet}(E), \nabla)$ consists of a   
	vector bundle $E$ on $X$ together with a   flat connection $\nabla$ and a Griffiths transversal 
	filtration $\mathcal{F}^{\bullet}(E)$ (see \eqref{filtration}) of $E$ by its   subbundles 
	(see Definition \ref{G-transv}), such that the quotients 
	$\gr^i(\mathcal{F}^{\bullet}(E)) := \mathcal{F}^i(E)/\mathcal{F}^{i+1}(E)$ 
	are semistable   vector bundles on $X$ and the $\mathcal{O}_X$--linear homomorphisms 
	$\theta_{\nabla}^i$ in \eqref{theta-i} induced by $\nabla$ are isomorphisms, 
	for all $i = 1, \ldots, n-1$. 
\end{definition}

\begin{remark}
	Note that, if $(E, \mathcal{F}^{\bullet}(E), \nabla)$ is a generalized oper on $X$ and 
	$\deg(\Omega_X^1) > 0$, then $\mathcal{F}^{\bullet}(E)$ is the Harder-Narasimhan filtration of $E$. 
\end{remark}

\begin{theorem}\label{thm-2}
	Let $(E, \mathcal{F}^{\bullet}(E), \nabla)$ be a generalized oper on $X$. 
	If $\deg(\Omega_X^1) \geq 0$, then the associated Higgs bundle 
	$(\gr(\mathcal{F}^{\bullet}(E)), \theta_{\nabla})$ on $X$ is semistable, 
	where $\gr(\mathcal{F}^{\bullet}(E)) = \bigoplus\limits_{i=0}^{n-1} \gr^i(\mathcal{F}^{\bullet}(E))$. 
\end{theorem}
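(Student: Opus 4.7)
The plan is to reduce the statement directly to Theorem \ref{thm-1}. By the construction of the associated Higgs bundle in Section \ref{as-higgs-bun}, Griffiths transversality of the filtration with respect to $\nabla$ ensures that the induced $\mathcal{O}_X$-linear map $\theta_{\nabla}$ sends $\gr^i(\mathcal{F}^{\bullet}(E))$ into $\gr^{i-1}(\mathcal{F}^{\bullet}(E)) \otimes \Omega_X^1$, and flatness of $\nabla$ guarantees that $\theta_{\nabla} \wedge \theta_{\nabla} = 0$. Thus the decomposition $\gr(\mathcal{F}^{\bullet}(E)) = \bigoplus_{i=0}^{n-1} \gr^i(\mathcal{F}^{\bullet}(E))$ equips the associated Higgs bundle with a natural structure of a system of Hodge bundles in the sense of Section \ref{sec-3}, with $\gr^{-1}(\mathcal{F}^{\bullet}(E)) := 0$.

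Next I would verify that the remaining hypotheses of Theorem \ref{thm-1} hold for this system of Hodge bundles. By Definition \ref{gen-oper}, each graded piece $\gr^i(\mathcal{F}^{\bullet}(E))$ is a semistable vector bundle on $X$, and by the very same definition, the induced $\mathcal{O}_X$-linear homomorphisms
\[
\theta_{\nabla}^i : \gr^i(\mathcal{F}^{\bullet}(E)) \longrightarrow \gr^{i-1}(\mathcal{F}^{\bullet}(E)) \otimes \Omega_X^1
\]
are isomorphisms for all $i = 1, \ldots, n-1$. Combined with the assumption that $\deg(\Omega_X^1) \geq 0$, this is exactly the data required to invoke Theorem \ref{thm-1}.

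Applying Theorem \ref{thm-1} to $(\gr(\mathcal{F}^{\bullet}(E)), \theta_{\nabla})$ immediately yields the desired semistability. The honest assessment is that there is no serious obstacle here beyond bookkeeping: the whole point of packaging Definition \ref{gen-oper} as it stands is to translate the hypotheses of Theorem \ref{thm-1} into the geometric language of flat connections and Griffiths transversal filtrations, so that the semistability of the associated graded Higgs bundle becomes a formal consequence. The only thing one must be mildly careful about is the indexing conventions, since Theorem \ref{thm-1} is stated for a decomposition $E = \bigoplus_{i=0}^n E_i$ while here the graded pieces are indexed from $0$ to $n-1$; but this is a cosmetic relabelling and changes nothing in the argument.
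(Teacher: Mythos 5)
Your proposal is correct and is essentially identical to the paper's own proof: both simply observe that a generalized oper hands you exactly the hypotheses of Theorem \ref{thm-1} (a system of Hodge bundles with semistable graded pieces, isomorphic $\theta_{\nabla}^i$, and $\deg(\Omega_X^1) \geq 0$) and then invoke that theorem. Your extra remarks on the indexing shift and on where the Hodge-bundle structure comes from are accurate but not needed beyond what the paper records.
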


\begin{proof}
	Since $(\gr(\mathcal{F}^{\bullet}(E)), \theta_{\nabla})$ has a structure of a system of Hodge bundles, 
	all $\gr^i(\mathcal{F}^{\bullet}(E))$ are semistable and all 
	$\theta_{\nabla}^i$ are isomorphisms, by Theorem \ref{thm-1}, 
	$(\gr(\mathcal{F}^{\bullet}(E)), \theta_{\nabla})$ is semistable. 
\end{proof}

\begin{remark}
	\begin{enumerate}
		\item If $X$ is a smooth complex projective curve of genus $g \geq 1$, 
		then we get Proposition \ref{prop-1} as a corollary to the Theorem \ref{thm-2}. 
		\item With appropriate notion of logarithmic Higgs semistability, Theorem \ref{thm-2} 
		holds for logarithmic connections singular over an effective divisor, using similar techniques. 
	\end{enumerate}
\end{remark}

\subsection{Semistability of Connections}

As before, let $X$ be a smooth polarized projective variety over and algebraically closed field $k$ of 
characteristic $p \geq 0$, and the cotangent bundle $\Omega_X^1$ is semistable and of non-negative degree. 

\begin{definition}
	Let $E$ be a torsion free coherent sheaf on $X$ together with a connection $\nabla : E \to E \otimes \Omega_X^1$. 
	Then the pair $(E, \nabla)$ is said to be \textit{semistable} (respectively, \textit{stable}) if for 
	any non-zero proper $\mathcal{O}_X$--submodule $F \subset E$ with torsion free quotient 
	sheaf $E/F$ on $X$ such that $\nabla(F) \subseteq F \otimes \Omega_X^1$, we have $\mu(F) \leq \mu(E)$ 
	(respectively, $\mu(F) < \mu(E)$). 
\end{definition}

\begin{definition}
	A \textit{partial oper} is a triple $(E, \mathcal{F}^{\bullet}(E), \nabla)$ consisting of a vector 
	bundle $E$ on $X$ together with a flat connection $\nabla$ and a filtration $\mathcal{F}^{\bullet}(E)$ of $E$ 
	by its subbundles on $X$ which is Griffiths transversal with respect to $\nabla$ such that the induced Higgs 
	bundle $(\gr(\mathcal{F}^{\bullet}(E)), \theta_{\nabla})$ is semistable on $X$. 
\end{definition}

Given a vector bundle $E$ on $X$ together with a semistable flat connection $\nabla$ on $E$, Lan-Sheng-Yang-Zuo 
proved that there is a filtration $\mathcal{F}^{\bullet}(E)$ of $E$ such that the triple 
$(E, \mathcal{F}^{\bullet}(E), \nabla)$ is a partial oper on $X$ (see \cite[Theorem 2.2]{LSYZ}). 
We now prove some sort of converse of the above result. 

\begin{theorem}\label{thm-3}
	Let $E$ be a vector bundle on a smooth projective variety $X$ over an algebraically closed field $k$ of positive 
	characteristic. Let $\nabla : E \longrightarrow E \otimes \Omega_X^1$ be a connection (not necessarily flat) on $E$. 
	Let 
	$$\mathcal{F}^{\bullet}(E) : 0 = \mathcal{F}^n(E) \subsetneq \mathcal{F}^{n-1}(E) \subsetneq \cdots 
	\subsetneq \mathcal{F}^1(E) \subsetneq \mathcal{F}^0(E) = E$$ 
	be a $\nabla$-Griffiths transversal filtration of $E$ by its subbundles such that the induced $\mathcal{O}_X$--module 
	homomorphism 
	$\theta_{\nabla} : \gr(\mathcal{F}^{\bullet}(E)) \longrightarrow \gr(\mathcal{F}^{\bullet}(E)) \otimes \Omega_X^1$ 
	is a Higgs field on $\gr(\mathcal{F}^{\bullet}(E))$ (i.e., $\theta_{\nabla} \bigwedge \theta_{\nabla} = 0$ in 
	$H^0(X, \sEnd(\gr(\mathcal{F}^{\bullet}(E)))\otimes\Omega_X^2)$), and the Higgs bundle 
	$(\gr(\mathcal{F}^{\bullet}(E)), \theta_{\nabla})$ is semistable. Then the pair $(E, \nabla)$ is semistable. 
\end{theorem}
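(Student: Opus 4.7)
The plan is to argue by contradiction. Suppose $(E,\nabla)$ is not semistable, and choose a destabilizing $\mathcal{O}_X$-submodule $F\subsetneq E$ with $E/F$ torsion free, $\nabla(F)\subseteq F\otimes\Omega_X^1$, and $\mu(F)>\mu(E)$. From $F$ I would manufacture a $\theta_\nabla$-invariant subsheaf of $\gr(\mathcal{F}^\bullet(E))$ of slope $\mu(F)$, contradicting the Higgs semistability hypothesis on $(\gr(\mathcal{F}^\bullet(E)),\theta_\nabla)$.

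The construction is the obvious one: set $\mathcal{F}^i(F):=F\cap\mathcal{F}^i(E)$ for every $i$, giving a filtration of $F$ by coherent subsheaves. I would first verify that it is $\nabla$-Griffiths transversal: for a local section $s$ of $\mathcal{F}^i(F)$, we have $\nabla(s)\in F\otimes\Omega_X^1$ by $\nabla$-invariance of $F$ and $\nabla(s)\in\mathcal{F}^{i-1}(E)\otimes\Omega_X^1$ by Griffiths transversality of $\mathcal{F}^\bullet(E)$, so $\nabla(s)\in\mathcal{F}^{i-1}(F)\otimes\Omega_X^1$. Because $\mathcal{F}^{i+1}(F)=\mathcal{F}^i(F)\cap\mathcal{F}^{i+1}(E)$, the inclusion $\mathcal{F}^i(F)\hookrightarrow\mathcal{F}^i(E)$ descends to a natural injection $\gr^i(\mathcal{F}^\bullet(F))\hookrightarrow\gr^i(\mathcal{F}^\bullet(E))$, so $\gr(\mathcal{F}^\bullet(F))=\bigoplus_i\gr^i(\mathcal{F}^\bullet(F))$ is a non-zero proper subsheaf of $\gr(\mathcal{F}^\bullet(E))$. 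The same computation shows $\theta_\nabla$ sends $\gr^i(\mathcal{F}^\bullet(F))$ into $\gr^{i-1}(\mathcal{F}^\bullet(F))\otimes\Omega_X^1$, making this a genuine Higgs subsheaf.

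The slope comparison is then immediate. Using additivity of rank and of degree (defined via $c_1$ of the determinant of a coherent sheaf on the smooth variety $X$) along the short exact sequences
$$0\longrightarrow\mathcal{F}^{i+1}(F)\longrightarrow\mathcal{F}^i(F)\longrightarrow\gr^i(\mathcal{F}^\bullet(F))\longrightarrow 0,$$
one obtains $\rk(F)=\sum_i\rk(\gr^i(\mathcal{F}^\bullet(F)))$ and $\deg(F)=\sum_i\deg(\gr^i(\mathcal{F}^\bullet(F)))$, and hence $\mu(\gr(\mathcal{F}^\bullet(F)))=\mu(F)$; the analogous identity for $E$ yields $\mu(\gr(\mathcal{F}^\bullet(E)))=\mu(E)$. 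Higgs semistability of $(\gr(\mathcal{F}^\bullet(E)),\theta_\nabla)$ then forces $\mu(F)\leq\mu(E)$, the desired contradiction. The stability assertion in Theorem \ref{thm-1.3} will follow from exactly the same argument with strict inequalities throughout.

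The only technical point I anticipate is that the intersections $F\cap\mathcal{F}^i(E)$ need not be subbundles of $E$, only coherent subsheaves, so all the degree arithmetic must be carried out in the category of coherent sheaves rather than of vector bundles. Beyond this routine bookkeeping I do not expect any serious obstacle; the argument is essentially a converse to the filtration-building procedure of \cite[Theorem 2.2]{LSYZ}, reading the Griffiths transversal filtration on $E$ in reverse in order to pass from $\nabla$-invariance of $F$ to $\theta_\nabla$-invariance of $\gr(\mathcal{F}^\bullet(F))$.
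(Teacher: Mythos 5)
Your proposal is correct and follows essentially the same route as the paper's own proof: intersect the Griffiths transversal filtration with the destabilizing $\nabla$-invariant subsheaf $F$, observe that the induced graded object is a Higgs subsheaf of $(\gr(\mathcal{F}^{\bullet}(E)),\theta_{\nabla})$ of the same slope as $F$, and contradict Higgs semistability. Your explicit verification of Griffiths transversality for $\mathcal{F}^{\bullet}(F)$ and your remark that the intersections are only coherent subsheaves (so the degree bookkeeping must be done for coherent sheaves) are points the paper passes over more quickly, but the argument is the same.
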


\begin{proof}
	If $(E, \nabla)$ were not semistable, then there is a non-zero proper $\mathcal{O}_X$--submodule 
	$F \subset E$ such that $\nabla(F) \subseteq F \otimes \Omega_X^1$ and 
	\begin{equation}\label{eqn-5.1}
		\mu(F) > \mu(E)\,.
	\end{equation}
	The filtration $\mathcal{F}^{\bullet}(E)$ induces a filtration on $F$ 
	$$\mathcal{F}^{\bullet}(F)\,\,\, :\,\,\, 0 = \mathcal{F}^n(F) \subseteq \mathcal{F}^{n-1}(F) \subseteq 
	\cdots \subseteq \mathcal{F}^1(F) \subseteq \mathcal{F}^0(F) = F\,,$$ 
	where $\mathcal{F}^i(F) = \mathcal{F}^i(E) \cap F$, for all $i$. 
	Since $\nabla(F) \subseteq F \otimes \Omega_X^1$, the injective homomorphisms 
	$\iota_i : \mathcal{F}^i(F)/\mathcal{F}^{i+1}(F) \hookrightarrow \mathcal{F}^i(E)/\mathcal{F}^{i+1}(E)$, 
	induced by the inclusions $\mathcal{F}^i(F) \subseteq \mathcal{F}^i(E)$, fits into the following commutative 
	diagram of $\mathcal{O}_X$--module homomorphisms 
	$$
	\xymatrix{
	\mathcal{F}^i(F)/\mathcal{F}^{i+1}(F) \ar@{^(->}[r]^{\iota_i} \ar[d]_{\theta_i'} & 
	\mathcal{F}^i(E)/\mathcal{F}^{i+1}(E) \ar[d]^{\theta_i} \\ 
	\left(\mathcal{F}^{i-1}(F)/\mathcal{F}^{i}(F)\right) \otimes \Omega_X^1 \ar@{^(->}[r]^{\iota_{i+1}} & 
	\left(\mathcal{F}^{i-1}(E)/\mathcal{F}^{i}(E)\right)\otimes \Omega_X^1 
	}
	$$
	where $\theta_i'$ is the restriction of $\theta_i$, for all $i$. 
	So $(\gr(\mathcal{F}^{\bullet}(F)), \theta_{\nabla}')$ is a non-zero Higgs subsheaf of the semistable Higgs bundle 
	$(\gr(\mathcal{F}^{\bullet}(E)), \theta_{\nabla})$. Now a simple degree rank computation shows that 
	$$\mu(F) = \mu(\gr(\mathcal{F}^{\bullet}(F))) \leq \mu(\gr(\mathcal{F}^{\bullet}(E))) = \mu(E)\,,$$ 
	which contradicts the inequality \eqref{eqn-5.1}. Therefore, $(E, \nabla)$ is semistable. 
\end{proof}

\begin{corollary}\label{cor-2}
	Let $E$ be a vector bundle on $X$ together with a flat connection $\nabla : E \to E \otimes \Omega_X^1$. 
	Suppose that $E$ admits a filtration by its subbundles 
	$$\mathcal{F}^{\bullet}(E)\,\,\, :\,\,\, 0 = \mathcal{F}^n(E) \subsetneq \mathcal{F}^{n-1}(E) \subsetneq 
	\cdots \subsetneq \mathcal{F}^1(E) \subsetneq \mathcal{F}^0(E) = E$$ 
	which is Griffiths transversal with respect to $\nabla$, and the $\nabla$--induced $\mathcal{O}_X$--module 
	homomorphisms $\theta_i : \mathcal{F}^i(E)/\mathcal{F}^{i+1}(E) \longrightarrow 
	\left(\mathcal{F}^{i-1}(E)/\mathcal{F}^i(E)\right)\otimes \Omega_X^1$ 
	are isomorphisms, for all $i = 1, \ldots, n-1$. Then the pair $(E, \nabla)$ is semistable if 
	$\mathcal{F}^i(E)/\mathcal{F}^{i+1}(E)$ is semistable, for all $i = 0, 1, \ldots, n-1$. 
\end{corollary}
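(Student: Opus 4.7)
The plan is to observe that this corollary is a direct amalgamation of Theorem \ref{thm-1} and Theorem \ref{thm-3}, with the only real content being the identification of the associated graded bundle as a system of Hodge bundles in the sense of Section \ref{sec-3}.

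First, I would set $E_i := \gr^i(\mathcal{F}^\bullet(E))$ for $i = 0, 1, \ldots, n-1$ (with the convention $E_{-1} = 0$) and exhibit $\left(\gr(\mathcal{F}^\bullet(E)), \theta_\nabla\right) = \left(\bigoplus_{i=0}^{n-1} E_i,\, \theta_\nabla\right)$ as a system of Hodge bundles. The direct sum decomposition is tautological from \eqref{gr-bun}, while the restriction $\theta_\nabla\vert_{E_i}$ coincides with the map $\theta_i$ from the hypothesis, hence is an isomorphism onto $E_{i-1} \otimes \Omega_X^1$ for each $i = 1, \ldots, n-1$. Flatness of $\nabla$ guarantees $\theta_\nabla \wedge \theta_\nabla = 0$, as already recorded in Subsection \ref{as-higgs-bun}, so $\left(\gr(\mathcal{F}^\bullet(E)), \theta_\nabla\right)$ is genuinely a Higgs bundle.

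Next, I would invoke Theorem \ref{thm-1}. The standing hypotheses of this subsection give that $\Omega_X^1$ is semistable of non-negative degree; the isomorphism condition on each $\theta_i$ has just been observed; and semistability of each $E_i = \gr^i(\mathcal{F}^\bullet(E))$ is part of the corollary's hypothesis. Theorem \ref{thm-1} therefore yields that $\left(\gr(\mathcal{F}^\bullet(E)), \theta_\nabla\right)$ is a semistable Higgs bundle on $X$.

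Finally, with semistability of the associated graded Higgs bundle in hand, Theorem \ref{thm-3} applies directly to the triple $(E, \mathcal{F}^\bullet(E), \nabla)$ and concludes that the pair $(E, \nabla)$ is itself semistable. No essential obstacle arises in the argument; the main care required is only bookkeeping, namely aligning the indexing of the filtration $\mathcal{F}^\bullet(E)$ of length $n$ with the summation range $\bigoplus_{i=0}^{m} E_i$ used in the statement of Theorem \ref{thm-1} (here one takes $m = n-1$), and recording that the integrability $\theta_\nabla \wedge \theta_\nabla = 0$ hypothesised in Theorem \ref{thm-3} is furnished for free by the flatness of $\nabla$.
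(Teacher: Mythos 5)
Your proof is correct and follows essentially the same route as the paper: identify $\left(\gr(\mathcal{F}^{\bullet}(E)), \theta_{\nabla}\right)$ as a system of Hodge bundles with isomorphic Higgs components, apply Theorem \ref{thm-1} to get its semistability, and then conclude via Theorem \ref{thm-3}. The one divergence is that the paper first disposes of the case $char(k) = 0$ separately (there $(E, \nabla)$ is automatically semistable, since any subsheaf preserved by a flat connection has vanishing first Chern class and hence slope zero) and invokes Theorem \ref{thm-3} only when $char(k) > 0$, because Theorem \ref{thm-3} as stated assumes positive characteristic; your direct appeal to it in all characteristics is substantively harmless (its proof never uses the characteristic, and Theorem \ref{thm-1.3} is the characteristic-free version), but strictly speaking falls outside its stated hypotheses.
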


\begin{proof}
	If $p = char(k)$ is zero, then for any flat connection $\nabla$ on 
	$E$, the pair $(E, \nabla)$ is automatically semistable. This follows from the fact that any non-zero coherent 
	sheaf $F$ on $X$ admitting a flat connection has zero first Chern class, and hence has zero slope with respect 
	to any polarization on $X$. This is not the case if $p > 0$. 

	If $char(k) = p > 0$, since $(\gr(\mathcal{F}^{\bullet}(E)), \theta_{\nabla})$ is semistable by Theorem \ref{thm-1}, 
	we are done by using Theorem \ref{thm-3}. 
\end{proof}

\begin{remark}
	Corollary \ref{cor-2} was proved over smooth projective curve in positive characteristics 
	by Joshi-Pauly (see \cite[Proposition 3.4.4]{JP}). 
\end{remark}

\section*{Acknowledgement}
This work is supported by the Post-doctoral fellowship of the Institute of Mathematical Sciences (HBNI), Chennai, India.


\end{document}